\newtheorem{theorem}{Theorem}[section]
\newtheorem{lemma}[theorem]{Lemma}
\newtheorem{proposition}[theorem]{Proposition}
\theoremstyle{definition}
\newtheorem{remark}[theorem]{Remark}
\numberwithin{equation}{section}
\newcommand\N{\mathbb{N}}
\newcommand\Z{\mathbb{Z}}
\newcommand\F{\mathbb{F}}
\newcommand\M{\mathbb{M}}
\def\sideremark#1{\ifvmode\leavevmode\fi\vadjust{\vbox
to0pt{\vss \hbox to 0pt{\hskip\hsize\hskip1em
\vbox{\hsize2cm\tiny\raggedright\pretolerance10000
\noindent#1\hfill}\hss}\vbox to8pt{\vfil}\vss}}}
\begin{document}

\title[cb-frames for operator spaces]
{cb-frames
for  operator spaces}

\author{Rui Liu}
\address{School of Mathematical Sciences and LPMC, Nankai University, Tianjin 300071, China
}
\email{ruiliu@nankai.edu.cn
}
\author{Zhong-Jin Ruan}
\address{Department of Mathematics, University of Illinois at Urbana-Champaign, Urbana, Illinois 61801, USA}
\email{ruan@math.uiuc.edu}

\begin{abstract}
In this paper, we introduce the concept of cb-frames for operator
spaces. We show that there is a concrete cb-frame
for the reduced free group
$C^*$-algebra  $C_r^*(\F_2)$,  which is derived from
the infinite convex decomposition of the biorthogonal system
$(\lambda_s, \delta_s)_{s\in\F_2}$.
We show that, in general,   a separable operator space $X$ has  a cb-frame
 if and only if it has  the completely bounded approximation property if and only if
it is completely isomorphic to a completely
complemented subspace of an operator space with a cb-basis.
Therefore,  a discrete group  $\Gamma$ is weakly amenable if and only if  the reduced group
C*-algebra $C^*_r(\Gamma)$ has a cb-frame.
Finally, we show that, in contrast to Banach space case, there exists  a separable operator space,
which can not be completely isomorphic to a subspace of an operator space with a cb-basis.

\end{abstract}

\thanks{
The first author was partially supported by the NSFC 11101220 and 11201336. The
second author was partially supported by the Simons Foundation.}

\date {}

\keywords{completely bounded approximation property, cb-basis, cb-frame, operator space, reduced group $C^*$-algebra.}

\maketitle

\section{Introduction}

In \cite{JNRX}, Junge, Nielsen, Ruan and Xu introduced the notion of cb-basis for operator spaces.
Let us recall that a separable operator space $X$ has a \emph{cb-basis} if $X$ has a
 Schauder  basis $(e_n)$ and the natural projections
\begin{equation}\label{eq:6}
P_m\big(\sum_{n=1}^\infty \alpha_n e_n \big)=\sum_{n=1}^m
\alpha_n e_n
\end{equation} satisfy $\sup_m\|P_m\|_{cb}<\infty$.
It is shown in \cite {JNRX}   that every separable nuclear $C^*$-algebra has a
cb-basis. In particular,  the reduced group $C^*$-algebra $C_r^*(\Gamma)$ of  an amenable group $\Gamma$
has a cb-basis.


It is clear from the definition that if a separable  operator space $X$ has a cb-basis, then it must have the
\emph{completely bounded approximation property} (\emph{CBAP}), i.e. there exists a sequence  of
finite-rank maps $\Phi_k :  X\to X$ such that
$\sup_k \|\Phi_k\|_{cb} < \infty$  and $\Phi_k(x)\to x$ for every
$x\in X$.
It is natural to ask whether   CBAP implies cb-basis.
This is not true  for general operator spaces since there exist a separable Banach space $X$ which has
the \emph{bounded approximation property} (\emph{BAP}), but has no Schauder basis.
Then MIN(X), the space $X$ equipped with the MIN-operator space structure,  is an operator space with  CBAP, but  no cb-basis.
However the problem is still open for separable C*-algebras.  In particular, it is not known
whether $C^*_r(\Gamma)$ has a cb-basis if $\Gamma$ is a  weakly amenable discrete group.

In \cite{HL},  Han and Larson introduced the concept of frames as a compression of a basis.
It is a generalization of dual frame pairs from Hilbert spaces to Banach spaces. In \cite{CHL},
Casazza, Han and Larson showed  that a separable Banach space has the BAP
if and only if it has a frame.
Motivated by these results, we can consider cb-frames for  operator spaces.
Here is the definition.
Let $X$ be an operator space   and $X^*$ be its canonical operator  dual.
A sequence $(x_n,f_n) \subset X\times X^*$ is a
\emph{cb-frame} for $X$ if $(x_n,f_n)$ is a frame, i.e.
\[
x = \sum_{n=1}^\infty f_n(x) x_n =  \sum_{n=1}^\infty (x_n \otimes f_n)(x)
\]
 for all $x \in X$,  and the initial sums
\[
S_m(x) = \sum _{n=1}^m x_n \otimes f_n
\]
define completely bounded maps on $X$ with  $\sup_m \|S_m\|_{cb}<\infty$.

In section 2, we show that there is a natural  cb-frame for
$C_r^*(\F_2)$, which is derived from
the infinite convex decomposition of the biorthogonal system
 $(\lambda_s, \delta_s)_{s\in\F_2}$.
Here  $\lambda$ is the left regular
representation of $\F_2$  and $\delta_s \in B_r(\F_2) = C^*_r(\F_2)^*$
is the characteristic function at $s\in \F_2$.

In Section 3, we prove  some equivalent conditions for general  operator spaces.
We show in Theorem \ref {th:1} and Theorem \ref {th:2} that
a separable operator space $X$ has  a cb-frame  if and only if it has  the CBAP if and only if
it is completely isomorphic to a completely complemented subspace of an operator space with a cb-basis.
These are  natural operator space analogues of corresponding Banach space results by Casazza, Han and Larson \cite {CHL},
Johnson, Rosenthal and Zippin \cite{JRZ}, and Pe{\l}czy\'{n}ski  \cite{Pe}.
We also show in Remark \ref{re:1}  that the cb-basis  constructed in the proof
of Theorem \ref{th:2} is  minimal.

It is known from the  Banach-Mazur theorem   that any separable
Banach space can be isometrically embedded into $C[0,1]$,  which is a separable Banach space with a Schauder basis.
So it is natural to ask whether the corresponding result holds  for general operator spaces.
Using the  Hilbertian operator space $X_0$ constructed by Oikhberg and Ricard \cite {OR},
we show in Theorem \ref {th:4}  that this is false for   operator spaces.

\section{Cb-frame for $C_r^*(\F_2)$}

Let $\F_2$ be the free group of two generators and let $\lambda$ be the left regular representation of $\F_2$.
The reduced group $C^*$-algebra $C_r^*(\F_2)$
is defined to be the norm-closure of $\mathrm{span}\{\lambda_s|s\in\F_2\}$ in $B(\ell_2(\F_2))$.
It is well-known (see  \cite{Ha, HK})  that   $C^*_r(\F_2)$  has the completely contractive approximation property (CCAP).
%
The main result in  this section is the following theorem, in which we show that  $C_r^*(\F_2)$ has a cb-frame.

\begin{theorem}\label{pp:1}
There exists a cb-frame $(x_n,f_n)$  for  $C_r^*(\F_2)$,
which is derived from
the infinite convex decomposition of the biorthogonal system
$(\lambda_s, \delta_s)_{s\in\F_2}$.
More precisely,  there is a
surjective map $\varphi:\N\rightarrow \F_2$ and a sequence of
positive scalars $(a_n)$ satisfying
\begin{enumerate}
\item[(i)]
$x_n=\lambda_{\varphi(n)}, f_n=a_n \delta_{\varphi(n)}$ for all
$n\in\N$;
\item[(ii)] $\displaystyle\sum_{n\in \varphi^{-1}(s)} a_n=1$ for all $s\in \F_2$.
\end{enumerate}
\end{theorem}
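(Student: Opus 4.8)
The plan is to turn Haagerup's completely contractive approximation of $C_r^*(\F_2)$ into an explicit statement about the biorthogonal system $(\lambda_s,\delta_s)_{s\in\F_2}$, and then to ``smear'' the resulting finite-rank maps across $\N$ so that their partial sums form one increasing family of Fourier multipliers. For the first step I would start from Haagerup's radial functions: for suitable $r_k\uparrow 1$ and $N_k\uparrow\infty$, set $\psi_k(s)=r_k^{|s|}\mathbf 1_{\{|s|\le N_k\}}$. These are finitely supported, satisfy $0\le\psi_k\le\psi_{k+1}\le 1$ and $\psi_k(s)\to 1$ for each $s$, and the associated Fourier multipliers $M_{\psi_k}\colon\lambda_s\mapsto\psi_k(s)\lambda_s$ have $\sup_k\|M_{\psi_k}\|_{cb}<\infty$: each radial profile $\dot\psi_k$ is nonincreasing and vanishes at infinity, so the relevant Hankel matrix $\big(\dot\psi_k(i+j)-\dot\psi_k(i+j+2)\big)_{i,j\ge 0}$ has nonnegative entries and hence, by Haagerup's description of radial multipliers on free groups, norm at most $\dot\psi_k(0)+\dot\psi_k(1)\le 2$. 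Putting $\psi_0:=0$ and $\rho_j:=\psi_j-\psi_{j-1}\ge 0$, each $\rho_j$ is radial, finitely supported, and $\sum_{j\ge 1}\rho_j(s)=\lim_k\psi_k(s)=1$ for all $s$; this is the promised infinite convex decomposition, $\delta_s=\sum_j\rho_j(s)\,\delta_s$ with nonnegative weights summing to $1$.

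Next I would enumerate the atoms of this decomposition by a three-level scheme: run over blocks $j=1,2,\dots$; inside block $j$ run over the spheres $\{|s|=m\}$ in increasing order of $m$ (only those with $\dot\rho_j(m)\neq 0$); and inside the $m$-th sphere of block $j$ run $L_{j,m}$ identical rounds, each round cycling once through all $\#\{|s|=m\}$ elements and assigning weight $\dot\rho_j(m)/L_{j,m}$. This yields a surjection $\varphi\colon\N\to\F_2$ and positive scalars $(a_n)$ with $x_n=\lambda_{\varphi(n)}$, $f_n=a_n\delta_{\varphi(n)}$, proving (i), and $\sum_{n\in\varphi^{-1}(s)}a_n=\sum_j\rho_j(s)=1$, proving (ii). Since $(\lambda_s\otimes\delta_s)(x)=\delta_s(x)\lambda_s$, the partial sum $S_m=\sum_{n\le m}a_n(\lambda_{\varphi(n)}\otimes\delta_{\varphi(n)})$ is the Fourier multiplier $M_{\beta_m}$ whose symbol $\beta_m(s)=\sum_{n\le m,\ \varphi(n)=s}a_n$ increases pointwise to $1$. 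Granting the uniform bound below, the convergence $S_m(\lambda_t)=\beta_m(t)\lambda_t\to\lambda_t$ then extends by density of $\operatorname{span}\{\lambda_t\}$ to $S_m(x)\to x$ for every $x\in C_r^*(\F_2)$, which is exactly the frame reconstruction $x=\sum_n f_n(x)x_n$.

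The one real obstacle is the uniform estimate $\sup_m\|S_m\|_{cb}<\infty$. Fix $m$, lying in block $j$, sphere $\{|s|=\mu\}$, partway through a round. Then $\beta_m$ equals $\psi_j$ on $\{|s|<\mu\}$, equals $\psi_{j-1}$ on $\{|s|>\mu\}$, and on $\{|s|=\mu\}$ takes only two values $c_0\le c_1$ with $c_1-c_0=\dot\rho_j(\mu)/L_{j,\mu}$ and $\dot\psi_{j-1}(\mu)\le c_0\le c_1\le\dot\psi_j(\mu)$. I would split $\beta_m=h+g$, where $h$ is the radial function equal to $\psi_j$ below sphere $\mu$, to $c_0$ on sphere $\mu$, and to $\psi_{j-1}$ above, and $g=(c_1-c_0)\mathbf 1_Q$ with $Q\subseteq\{|s|=\mu\}$ the portion of the current sphere already used. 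Since $\dot\psi_{j-1}(\mu)\le c_0\le\dot\psi_j(\mu)$ and both $\psi$'s have nonincreasing profiles vanishing at infinity, $\dot h$ is again nonincreasing and $[0,1]$-valued with $\dot h(n)\to 0$, so $\|M_h\|_{cb}\le 2$ exactly as before. And because each rank-one map $\lambda_s\otimes\delta_s$ is completely contractive (it is $x\mapsto\delta_s(x)\lambda_s$ with $\|\delta_s\|=\|\lambda_s\|=1$), the triangle inequality gives $\|M_g\|_{cb}\le (c_1-c_0)\,\#Q\le \#\{|s|=\mu\}\cdot\dot\rho_j(\mu)/L_{j,\mu}$; choosing $L_{j,\mu}:=2^j\,\#\{|s|=\mu\}$ makes this $\le 2^{-j}\le 1$. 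Hence $\|S_m\|_{cb}\le\|M_h\|_{cb}+\|M_g\|_{cb}\le 3$ for all $m$.

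The subdivision into many rounds is forced by this last estimate: exhausting a sphere one element at a time would leave a non-radial error of cb-norm of order $\dot\rho_j(\mu)\cdot\#\{|s|=\mu\}$, which grows without bound in $\mu$; the small rounds make that error negligible, while processing the spheres in increasing order is what keeps the residual radial part $h$ monotone — and monotonicity of its profile is precisely what Haagerup's Hankel criterion turns into a uniform cb-multiplier bound. Everything else (verifying (i), (ii), the surjectivity of $\varphi$, and the density argument for the frame identity) is routine bookkeeping.
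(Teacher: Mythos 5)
Your overall architecture is the same as the paper's: truncated Haagerup multipliers, a telescoping decomposition into finite-rank blocks, and smearing each rank-one term over many copies so that partial sums inside a block contribute only an $O(1)$ error. But the step that carries all the analytic weight --- the uniform bound $\sup_k\|M_{\psi_k}\|_{cb}\le 2$ and, later, $\|M_h\|_{cb}\le 2$ --- rests on a false principle. It is not true that a radial symbol whose profile is nonincreasing, $[0,1]$-valued and vanishing at infinity defines a cb multiplier of norm at most $\dot\psi(0)+\dot\psi(1)$: the indicator of the ball $\{|s|\le N\}$ satisfies all of these hypotheses, yet its cb multiplier norm on $C_r^*(\F_2)$ grows linearly in $N$ (its Hankel matrix is supported on the two antidiagonals $i+j\in\{N-1,N\}$ with entries $1$, so its trace norm is of order $N$; nonnegativity of the entries gives no control of the trace norm, since that would require positive semidefiniteness, i.e.\ a moment-sequence condition on the profile). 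So your justification fails already for $\psi_k=r_k^{|s|}\mathbf 1_{\{|s|\le N_k\}}$ unless $r_k$ and $N_k$ are coupled, and it fails more seriously for the hybrid symbol $h$ (equal to $\psi_j$ below sphere $\mu$ and to $\psi_{j-1}$ above it), whose uniform boundedness is exactly the hard point of your sphere-by-sphere ordering and is nowhere actually established.

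The paper gets the uniform bound by a different and safer route: it writes $\Phi_{t,m}=\Phi_t(P_0+\cdots+P_m)$ and estimates $\|\Phi_t-\Phi_{t,m}\|_{cb}\le\sum_{d>m}e^{-td}\|P_d\|_{cb}\le 2\sum_{d>m}d\,e^{-td}$ using the Buchholz--Pisier bound $\|P_d\|_{cb}\le 2d$; choosing $t_k=1/\sqrt{k}$, $m_k=k$ makes this tail vanish, so $\sup_k\|\Phi_{t_k,m_k}\|_{cb}<\infty$ because each $\Phi_{t_k,m_k}$ is a small perturbation of the genuinely completely positive $\Phi_{t_k}$. Crucially, the paper then orders the frame so that every partial sum has the form $\Phi_{t_{k-1},m_{k-1}}+\frac{p}{(2\cdot3^k-1)^2}\Psi_k+(\text{small rank-one remainder})$, i.e.\ it interpolates convexly between consecutive approximants by scaling the \emph{entire} increment $\Psi_k$, rather than updating one sphere at a time; this reduces $\sup_m\|S_m\|_{cb}$ to a triangle inequality. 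To repair your argument you would either have to adopt this ordering, or genuinely prove $\sup\|M_h\|_{cb}<\infty$ for your hybrids via the Haagerup--Steenstrup--Szwarc trace-norm criterion with a quantitative coupling of $r_j$, $N_j$ and the jump sizes $\mu(r_j^\mu-r_{j-1}^\mu)$ --- a nontrivial computation that your one-line monotonicity claim does not replace. The enumeration, the convexity statement (ii), and the small-rounds device for the non-radial error are all fine.
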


The proof will be carried out in the following arguments and lemmas.
Let us first recall  from \cite{Ha} that the word length function $s\in\F_2\mapsto
|s|\in[0,\infty)$ is conditionally negative definite on $\F_2$, and thus by
Schoenburg's theorem the map   $\varphi_t(s)={\rm e}^{-t|s|}$ is a positive definite
function (with $\varphi_t(e)=1$) on $\F_2$ for any $t>0$.
This gives us a family of  unital completely positive maps
$(\Phi_t)_{t > 0}$ on $C_r^*(\F_2)$ such that
\begin{equation}
\label {F.2.1}
\Phi_t(\lambda_s)=\varphi_t(s)\lambda_s={\rm e}^{-t|s|}\lambda_s
\end{equation}
 for all  $s\in\F_2.$
It is clear from (\ref  {F.2.1}) that
$\lim_{t\rightarrow0^+}\|\Phi_t(\lambda_s)-\lambda_s\|=0$ for all  $s\in\F_2$.
Then  $(\Phi_t)_{t > 0}$ is a family of unital
completely positive maps converging to the identity on
$C_r^*(\F_2)$ in the point-norm topology.

Let $W_d=\{s\in\F_2:|s|=d\}$ be the set of all words in $\F_2$
with length $d$, and let $\chi_{W_d}$ be the characteristic
function on   $W_d$. Then
\[
P_d:\lambda_s\in
C_r^*(\F_2)\mapsto \lambda_s\chi_{W_d}(s)\in C_r^*(\F_2)
\]
is the finite-rank projection onto the subspace
$E_d= \mathrm{span}\{\lambda_s:|s|=d\}$. It
is known  (see \cite{Bu,Pi}) that this projection is completely bounded with
$\|P_d\|_{cb}\le 2d$.
For each $t > 0$ and $m \in {\mathbb N}$, we get a  completely bounded finite-rank map
\[
\Phi_{t,m}=\Phi_t(P_0+ \cdot \cdot \cdot +P_m)
\]
 from
$C_r^*(\F_2)$ onto $\sum _{d=0}^m E_d = \mathrm{span}\{\lambda_s:|s|\le m\}$. We can easily obtain the following lemma,
which is  known by experts in the fields.
 We include a calculation for the convenience of readers.

\begin{lemma} \label {P.2.2}
For each $t > 0$ and $m \in \N$,  we have
\[
 \|\Phi_t-\Phi_{t,m}\|_{cb}\le 2 \sum_{d=m+1}^\infty  {\rm e}^{-td}d \to 0  \qquad (\mbox{as} ~ m \to \infty).
 \]
\end{lemma}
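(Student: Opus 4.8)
The plan is to exploit the fact that $\Phi_t$ acts as a scalar on each spectral subspace $E_d$. First I would observe that, since $\Phi_t(\lambda_s)=e^{-t|s|}\lambda_s$, the restriction of $\Phi_t$ to $E_d=\mathrm{span}\{\lambda_s:|s|=d\}$ is exactly multiplication by the scalar $e^{-td}$; because $P_d$ maps $C_r^*(\F_2)$ into $E_d$, this gives the operator identity $\Phi_t\circ P_d=e^{-td}P_d$. Hence $\Phi_{t,m}=\Phi_t\circ(P_0+\cdots+P_m)=\sum_{d=0}^m e^{-td}P_d$, and for any $m<m'$ in $\N$,
\[
\Phi_{t,m'}-\Phi_{t,m}=\Phi_t\circ(P_{m+1}+\cdots+P_{m'})=\sum_{d=m+1}^{m'}e^{-td}P_d .
\]
Applying the triangle inequality for the cb-norm together with the estimate $\|P_d\|_{cb}\le 2d$ yields $\|\Phi_{t,m'}-\Phi_{t,m}\|_{cb}\le 2\sum_{d=m+1}^{m'}e^{-td}d$.

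Next I would use this Cauchy estimate twice. Since $e^{-t}<1$, the series $\sum_{d=0}^\infty e^{-td}d=e^{-t}/(1-e^{-t})^2$ converges, so the displayed bound shows that $(\Phi_{t,m})_m$ is Cauchy in the cb-norm and therefore converges to some completely bounded map $\Psi$ on $C_r^*(\F_2)$. Evaluating at a fixed $\lambda_s$ with $|s|=k$: for every $m\ge k$ we have $(P_0+\cdots+P_m)(\lambda_s)=\lambda_s$, hence $\Phi_{t,m}(\lambda_s)=\Phi_t(\lambda_s)$, and passing to the limit gives $\Psi(\lambda_s)=\Phi_t(\lambda_s)$ for all $s$. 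By linearity and the norm-density of $\mathrm{span}\{\lambda_s:s\in\F_2\}$ in $C_r^*(\F_2)$, together with the boundedness of both maps, we conclude $\Psi=\Phi_t$. Letting $m'\to\infty$ in the Cauchy estimate then gives $\|\Phi_t-\Phi_{t,m}\|_{cb}\le 2\sum_{d=m+1}^\infty e^{-td}d$.

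Finally, the right-hand side is the tail of the convergent series $\sum_d 2e^{-td}d$, so it tends to $0$ as $m\to\infty$, which completes the proof. There is essentially no hard step here; the only point demanding a little care is the passage from the formal expression $\Phi_t-\Phi_{t,m}=\sum_{d>m}e^{-td}P_d$ to a genuine cb-norm-convergent identity, and this is exactly what the Cauchy argument supplies, the absolute summability being forced by $\|P_d\|_{cb}\le 2d$ and $e^{-t}<1$.
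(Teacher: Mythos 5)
Your proof is correct and rests on exactly the same ingredients as the paper's: the scalar action $\Phi_t\circ P_d = e^{-td}P_d$, the estimate $\|P_d\|_{cb}\le 2d$, the triangle inequality, and the summability of $\sum_d d\,e^{-td}$. The only difference is organizational: the paper evaluates $\Phi_t-\Phi_{t,m}$ directly on elements of $B(\ell_2)\otimes C^*_r(\F_2)$ with finitely many nonzero coefficients, so the tail sum is automatically finite and no limiting argument is needed, whereas you work at the operator level and use completeness of the space of completely bounded maps together with a density argument to identify $\lim_m \Phi_{t,m}$ with $\Phi_t$ --- both routes are sound.
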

\begin{proof}
Let $x=\sum_{d=0}^\infty\sum_{|s|=d}a(s)\otimes\lambda_s$ be an element in $B(\ell_2)\otimes C_r^*(\F_2)$
with finitely many $a(s)$ nonzero. Then we have
\begin{eqnarray*}
 \|(\mathrm{id}_{B(\ell_2)}\otimes\Phi_t &-& \mathrm{id}_{B(\ell_2)}\otimes\Phi_{t,m})(x)\| =
  \|\sum_{d=m+1}^\infty\sum_{|s|=d}a(s)\otimes\Phi_t(\lambda_s)\|  \\
   &\le&
   \sum_{d=m+1}^\infty\|\sum_{|s|=d}a(s)\otimes  {\rm e}^{-td}\lambda_s\|
   = \sum_{d=m+1}^\infty  {\rm e}^{-td}\|\sum_{|s|=d}a(s)\otimes\lambda_s\| \\
   &=&
   \sum_{d=m+1}^\infty  {\rm e}^{-td}\|(\mathrm{id}_{B(\ell_2)}\otimes P_d)(x)\|
   \le  \sum_{d=m+1}^\infty  {\rm e}^{-td}\|P_d\|_{cb}\|x\|.
\end{eqnarray*}
Since $\|P_d\|_{cb} \le 2d$, we can conclude  that
\[ \|\Phi_t-\Phi_{t,m}\|_{cb}\le \sum_{d=m+1}^\infty  {\rm e}^{-td}\|P_d\|_{cb}\le 2 \sum_{d=m+1}^\infty  {\rm e}^{-td}d.\]
Since  $\sum_{d=1}^\infty  {\rm e}^{-td}d$  is a convergent positive infinite series, its remainder part converges to $0$.
Therefore, we can conclude that
\[
\|\Phi_t-\Phi_{t,m}\|_{cb}\le 2 \sum_{d=m+1}^\infty  {\rm e}^{-td}d \to 0.
\]
\end{proof}

According to   Lemma \ref {P.2.2}, for each $t> 0$, we can find $m\in \N$ such that $\|\Phi_t - \Phi_{t, m}\|_{cb}$ is sufficiently small.
Therefore, we can carefully choose  a sequence of  (monotone decreasing) $t_k\to 0$  and a sequence of (monotone increasing)
$m_k\to \infty$ such  that
\begin{equation*}
\lim_{k\to\infty}  \|\Phi_{t_k} - \Phi_{t_k, m_k}\|_{cb} \le\lim_{k\to\infty}\sum_{d=m_k+1}^\infty  {\rm e}^{-t_k d} d
=\lim_{k\to\infty}\frac{ {\rm e}^{-t_k(m_k+2)}}{(1- {\rm e}^{-t_k})^2}+
\frac{(m_k+1)\, {\rm e}^{-t_k(m_k+1)}}{1- {\rm e}^{-t_k}}
=0.\end{equation*}
For example,  we can choose  $t_k=1/\sqrt{k}$ and $m_k=k$, and we get
\[ \lim_{k\to\infty}\sum_{d=k+1}^\infty  {\rm e}^{-d/\sqrt{k}} d
=  0. \]
In this case,
the finite-rank maps $\{\Phi_{1/\sqrt{k},k}\}_{k \in \N}$ (with
$\sup_k\|\Phi_{1/\sqrt{k},k}\|_{cb}< \infty$) converge to the identity map on $C_r^*(\F_2)$ in
the point-norm topology.

Let $\{\delta_s\}\in B_r(\F_2) = C^*_r(\F_2)^*$ be the biorthogonal functionals of
$\{\lambda_s\}$. 
For $k=1$, we set
\begin{eqnarray}\label {F.2.1a} \Psi_1(x)&=&\Phi_{1,1}(x)
=\sum_{|s|\le 1} {\rm e}^{-|s|}\delta_s(x)\lambda_s
= \sum_{|s|\le
1} \big( \lambda_s\otimes  {\rm e}^{-|s|} \delta_s \big)(x).
\end{eqnarray}
There are 5 terms in  (\ref {F.2.1a}).
If we  list these 5 terms by the index $1 \le j \le 5$ and use $y_{1, j}$  (respectively, $g_{1, j}$)
 for the corresponding $\lambda_s$
(respectively, $ {\rm e}^{-|s|} \delta_s$) in each term, we can write
\begin{equation}\label {F.2.1b}
\Psi_1 =  \sum_{|s|\le
1} \big( \lambda_s\otimes  {\rm e}^{-|s|} \delta_s \big) = \sum_{j=1}^{5} (y_{1,j}\otimes g_{1,j}).
\end{equation}
For $k\ge 2$, we set
\begin{eqnarray}\label {F.2.1c}
\Psi_{k}(x)&=&(\Phi_{1/\sqrt{k},k}-\Phi_{1/\sqrt{k-1},k-1})(x)\nonumber\\
&=&\sum_{|s|\le k-1}
( {\rm e}^{-|s|/\sqrt{k}}- {\rm e}^{-|s|/ \sqrt{k-1}})\delta_s(x)\lambda_s+\sum_{|s|=k}
 {\rm e}^{-|s|/\sqrt{k}}\delta_s(x)\lambda_s\nonumber\\
 &=&\sum_{|s|\le k-1}
\big(\lambda_s\otimes ( {\rm e}^{-|s|/\sqrt{k}}- {\rm e}^{-|s|/\sqrt{k-1}}) \delta_s\big)(x)+
\sum_{|s|=k} \big( \lambda_s\otimes  {\rm e}^{-|s|/\sqrt{k}} \delta_s\big)(x).
\end{eqnarray}
There are  $2 \cdot 3^k -1 = 1 + 4 + 4\cdot 3 + \cdots + 4 \cdot 3^{k-1} $ terms
in (\ref {F.2.1c}).
So if we list these terms by the index $1 \le j \le  2 \cdot 3^k -1$  and
we use  $y_{k, j}$ (respectively, $g_{k, j}$)  for the corresponding $\lambda_s$
(respectively,  $( {\rm e}^{-|s|/\sqrt{k}}- {\rm e}^{-|s|/\sqrt{k-1}}) \delta_s$
or  ${\rm e}^{-|s|/\sqrt{k}} \delta_s$) in each term, then we can write
\begin{equation}\label {F.2.2a}
\Psi_k = \sum_{|s|\le k-1}
\big(\lambda_s\otimes ( {\rm e}^{-|s|/\sqrt{k}}- {\rm e}^{-|s|/\sqrt{k-1}}) \delta_s\big)+
\sum_{|s|=k} \big( \lambda_s\otimes  {\rm e}^{-|s|/\sqrt{k}} \delta_s\big) =  \sum_{j=1}^{2\cdot 3^k-1} (y_{k,j}\otimes g_{k,j}).
\end{equation}
This  is a sequence of completely bounded maps on $C^*_r(\F_2)$ with
\begin{equation}\label {F.2.3a}
\|\Psi_k\|_{cb} = \|\Phi_{1/\sqrt{k},k}-\Phi_{1/\sqrt{k-1},k-1}\|_{cb} \le 2 \sup_k \|\Phi_{\frac{1}{\sqrt k}, k}\|_{cb}
\end{equation}
and for each  $x \in C^*_r(\F_2)$, we have
\begin{equation}\label {F.2.3b}
x  = \lim_{k\to \infty} \Phi_{1/\sqrt{k},k}(x) =  \Phi_{1, 1}(x) + \sum_{k=2}^\infty (\Phi_{1/\sqrt{k},k}(x) - \Phi_{1/\sqrt{k-1},k-1}(x) )
= \sum_{k=1}^\infty \Psi_k(x).
\end{equation}

Now to get a frame, we need to  further modify the terms in (\ref {F.2.1b}) and
(\ref {F.2.2a})  by defining  $x_{k,i}=y_{k,j}$ and $f_{k,i}=\frac {g_{k,j}}{(2\cdot3^k-1)^2}$ when
$i=p(2\cdot3^k-1)+j$ with $0\le p\le (2\cdot3^k-1)^2 -1$ and $1\le j\le 2\cdot3^k-1$.
In this case, we can write
\begin{equation}\label {F.2.7a}
\Psi_k = \sum_{j=1}^{2\cdot 3^k-1} (y_{k,j}\otimes g_{k,j}) = \sum_{i=1}^{(2\cdot3^k-1)^3}(x_{k,i}\otimes f_{k,i})
\end{equation}
 for  all $k\in \N$, and thus  for each $x\in C_r^*(\F_2)$, we have
\begin{eqnarray}\label{eq:frame}
x&=&\sum_{k=1}^\infty\Psi_k(x)=\sum_{k=1}^\infty\big ( \sum_{j=1}^{2\cdot3^k-1} (y_{k,j}\otimes g_{k,j})(x)\big ) \nonumber
= \sum_{k=1}^\infty \big ( \sum_{i=1}^{(2\cdot3^k-1)^3} (x_{k,i}\otimes f_{k,i})(x)\big ).
\end{eqnarray}

Now we need to set up an appropriate order to relate each pair $(k, i)$  with a positive integer $n$.
For $k=1$, we have $5^3$  terms related to $(1, i)$. So we simply let $n = i$.
For $k \ge 2$,  we let $n= \sum_{r=1}^{k-1} (2\cdot 3^r-1)^3 + i$ with   $1\le i \le (2 \cdot 3^k -1)^3$.

\begin{lemma} \label {P.2.3}
If we let  $x_n = x_{k, i}$ and $f_n = f_{k, i}$, then   $(x_n, f_n)$ is a frame for   $C^*_r(\F_2)$.
\end{lemma}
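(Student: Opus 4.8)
The plan is to verify directly the only thing the word ``frame'' demands here, namely that the partial sums $S_N(x)=\sum_{n=1}^{N} f_n(x)\,x_n$ converge in norm to $x$ for every $x\in C^*_r(\F_2)$. (The estimate $\sup_N\|S_N\|_{cb}<\infty$, needed later for the cb-frame property, is not part of this lemma.) The inputs are the telescoping identity $\sum_{r=1}^{k}\Psi_r=\Phi_{1/\sqrt k,\,k}$, valid for all $k$ since $\Psi_1=\Phi_{1,1}$, together with the point-norm convergence $\Phi_{1/\sqrt k,\,k}(x)\to x$ established above. These give $x=\sum_{r=1}^\infty\Psi_r(x)$, hence $\|x-\sum_{r=1}^{k}\Psi_r(x)\|\to 0$ and $\|\Psi_k(x)\|\to 0$ as $k\to\infty$.

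Next I would unwind the reindexing block by block. Fix $x$, let $N$ be large, and write $N=\sum_{r=1}^{k-1}(2\cdot 3^r-1)^3+i$ with $1\le i\le(2\cdot 3^k-1)^3$, so $k\to\infty$ as $N\to\infty$; further write $i=p(2\cdot 3^k-1)+j$ with $0\le p\le(2\cdot 3^k-1)^2-1$ and $1\le j\le 2\cdot 3^k-1$. Inside the $k$-th block the pairs $(x_{k,i},f_{k,i})$ cycle through $\big(y_{k,1},\,g_{k,1}/(2\cdot 3^k-1)^2\big),\dots,\big(y_{k,\,2\cdot3^k-1},\,g_{k,\,2\cdot3^k-1}/(2\cdot3^k-1)^2\big)$ exactly $(2\cdot3^k-1)^2$ times, and $\sum_{j'=1}^{2\cdot3^k-1}y_{k,j'}\otimes g_{k,j'}=\Psi_k$; hence
\[
S_N(x)=\sum_{r=1}^{k-1}\Psi_r(x)+\frac{p}{(2\cdot3^k-1)^2}\,\Psi_k(x)+\frac{1}{(2\cdot3^k-1)^2}\sum_{j'=1}^{j}(y_{k,j'}\otimes g_{k,j'})(x).
\]
Subtracting $x=\sum_{r=1}^\infty\Psi_r(x)$ and regrouping the $\Psi_k(x)$ terms gives
\[
S_N(x)-x=-\sum_{r=k+1}^\infty\Psi_r(x)+\Big(\tfrac{p}{(2\cdot3^k-1)^2}-1\Big)\Psi_k(x)+\frac{1}{(2\cdot3^k-1)^2}\sum_{j'=1}^{j}(y_{k,j'}\otimes g_{k,j'})(x).
\]

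Then I would bound the three summands. The first has norm $\|x-\Phi_{1/\sqrt k,\,k}(x)\|\to 0$. The second has norm at most $\|\Psi_k(x)\|\to 0$, since $0\le 1-p/(2\cdot3^k-1)^2\le 1$. For the third, each $y_{k,j'}$ is some $\lambda_s$ of norm $1$ and each $g_{k,j'}$ is a scalar of modulus $\le 1$ times some $\delta_s$; realising $\delta_s$ as the vector functional $T\mapsto\langle T\xi_e,\xi_s\rangle$ on $B(\ell_2(\F_2))$ shows $\|\delta_s\|\le 1$, so $\|(y_{k,j'}\otimes g_{k,j'})(x)\|\le\|x\|$ and the third summand has norm at most $(2\cdot3^k-1)\|x\|/(2\cdot3^k-1)^2=\|x\|/(2\cdot3^k-1)\to 0$. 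Altogether $S_N(x)\to x$, which is the claim.

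The only genuinely delicate point is that a partial sum $S_N$ may slice through the middle of a block $\Psi_k$; this is precisely what the reindexing (repeating each block's rank-one pieces $(2\cdot3^k-1)^2$ times and scaling the functionals by $1/(2\cdot3^k-1)^2$) was engineered to handle, so that an incomplete pass through the $k$-th block contributes only $O(3^{-k})\|x\|$ and disappears in the limit. I expect this to be the crux of the argument, although once the cyclic bookkeeping is written down it reduces to the short three-term estimate above.
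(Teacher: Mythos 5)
Your proposal is correct and follows essentially the same route as the paper's proof: both decompose the partial sum $S_N$ at an arbitrary cut point into the completed blocks $\sum_{r=1}^{k-1}\Psi_r$, a fraction $p/(2\cdot 3^k-1)^2$ of the $k$-th block, and a partial cycle of at most $2\cdot 3^k-1$ rank-one terms, and then bound the difference from $x$ by the same three quantities $\|\sum_{r\ge k}\Psi_r(x)\|+\|\Psi_k(x)\|+\|x\|/(2\cdot 3^k-1)$. Your explicit check that $\|\delta_s\|\le 1$ via the vector-functional realisation is a small detail the paper leaves implicit, but otherwise the arguments coincide.
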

\begin{proof}
We need to show that  for every $x \in C^*_r(\F_2)$,  the infinite series $ \sum_{n=1}^\infty (x_n \otimes f_n)(x)$ converges to $x$.
Since  $x = \sum_{k=1}^\infty \Psi_k (x)$ is a convergent series in $C^*_r(\F_2)$,  for arbitrary $\epsilon>0$
there exists $k_0 \ge 2$ such that for any $k\ge k_0$
\begin{equation*}
 \| \sum_{r=k}^\infty \Psi_r(x)\|+\|\Psi_k(x)\|+\frac{\|x\|}{2\cdot 3^k-1}
< \epsilon.
\end{equation*}
For any $m>m_0= \sum_{r=1}^{k_0-1}(2\cdot3^r-1)^3$, we can write
$m= \sum_{r=1}^{k-1} (2\cdot 3^r-1)^3 + j$ for some  $k\ge k_0$ and $1\le j \le (2 \cdot 3^k  -1)^3$.
In this case, there exists $0\le p\le  (2\cdot3^k-1)^2 -1$ and $1\le q \le 2\cdot 3^k - 1$ such that
\begin{eqnarray*}
&&\|x- \sum_{n=1}^m (x_n\otimes f_n)(x) \| \\
&=&\|x-\sum_{r=1}^{k-1}\sum_{i =1}^{(2\cdot3^r-1)^3} (x_{r, i}\otimes f_{r, i})(x)-\sum_{t=1}^j (x_{k,t}\otimes f_{k,t})(x)\|\\
&=&\| \sum_{r=1}^\infty \Psi_r(x)-\sum_{r=1}^{k-1} \Psi_r(x) - \frac{p}{(2\cdot 3^k-1)^2}\Psi_k(x)
-\frac{1}{(2\cdot 3^k-1)^2}\sum_{i =1}^q (y_{k,i}\otimes g_{k,i})(x) \|\\
&\le& \| \sum_{r=k}^\infty \Psi_r(x)\|+\|\Psi_k(x)\|+\frac{\|x\|}{2\cdot 3^k-1} <  \epsilon.
\end{eqnarray*}
This shows  that $x=\sum_{n=1}^\infty (x_n \otimes f_n)(x)$ for every $x\in C^*_r(\F_2)$.
\end{proof}

\begin{lemma}\label {P.2.4}
The sequence $(x_n, f_n)$ is a cb-frame for $C^*_r(\F_2)$.
\end{lemma}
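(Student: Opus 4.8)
Since Lemma~\ref{P.2.3} already establishes that $(x_n,f_n)$ is a frame for $C^*_r(\F_2)$, the only thing that remains is to verify the uniform bound $\sup_m\|S_m\|_{cb}<\infty$ for the initial sums $S_m=\sum_{n=1}^m x_n\otimes f_n$. The plan is to reuse, essentially verbatim, the index decomposition of $m$ from the proof of Lemma~\ref{P.2.3} and then to estimate the three resulting blocks of $S_m$ separately. The one preliminary fact I would record first is that $c:=\sup_k\|\Phi_{1/\sqrt k,\,k}\|_{cb}<\infty$: each $\Phi_t$ is unital completely positive, so $\|\Phi_t\|_{cb}=1$, and then Lemma~\ref{P.2.2} gives $\|\Phi_{1/\sqrt k,\,k}\|_{cb}\le 1+2\sum_{d=k+1}^\infty {\rm e}^{-d/\sqrt k}d$, whose right-hand side is a convergent, hence bounded, sequence in $k$. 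In particular $\|\Psi_k\|_{cb}\le 2c$ for every $k$ by (\ref{F.2.3a}).

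Now fix $m$ and, exactly as in Lemma~\ref{P.2.3}, write $m=\sum_{r=1}^{k-1}(2\cdot3^r-1)^3+j$ with $1\le j\le(2\cdot3^k-1)^3$, and then $j=p(2\cdot3^k-1)+q$ with $0\le p\le(2\cdot3^k-1)^2-1$ and $1\le q\le 2\cdot3^k-1$. Unwinding the definitions of $x_{k,i}$ and $f_{k,i}$ yields
\[
S_m=\sum_{r=1}^{k-1}\Psi_r+\frac{p}{(2\cdot3^k-1)^2}\,\Psi_k+\frac{1}{(2\cdot3^k-1)^2}\sum_{i=1}^q y_{k,i}\otimes g_{k,i}.
\]
The first block equals $\Phi_{1/\sqrt{k-1},\,k-1}$ (interpreted as $0$ when $k=1$), so its cb-norm is at most $c$. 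The second block has cb-norm at most $\frac{p}{(2\cdot3^k-1)^2}\cdot 2c\le 2c$, since $p<(2\cdot3^k-1)^2$. For the third block, note that each $y_{k,i}$ is one of the unitaries $\lambda_s$, hence of norm $1$, and each $g_{k,i}$ is a scalar multiple $c_s\delta_s$ with $|c_s|\le1$ and $\|\delta_s\|_{C^*_r(\F_2)^*}\le1$; since bounded linear functionals on an operator space are automatically completely bounded with the same norm, $\|y_{k,i}\otimes g_{k,i}\|_{cb}\le\|y_{k,i}\|\,\|g_{k,i}\|\le1$, so the triangle inequality gives $\|\sum_{i=1}^q y_{k,i}\otimes g_{k,i}\|_{cb}\le q\le 2\cdot3^k-1$, and dividing by $(2\cdot3^k-1)^2$ the third block has cb-norm at most $\frac{1}{2\cdot3^k-1}\le1$. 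Summing the three estimates gives $\|S_m\|_{cb}\le 3c+1$ for all $m$, which is the desired uniform bound.

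I do not expect a genuine obstacle in this lemma; the substantive work has already been done upstream, namely in choosing $t_k=1/\sqrt k$ and $m_k=k$ so that $\sup_k\|\Phi_{1/\sqrt k,\,k}\|_{cb}<\infty$, and --- more to the point --- in the deliberate over-normalization $f_{k,i}=g_{k,j}/(2\cdot3^k-1)^2$ in (\ref{F.2.7a}). That normalization is designed precisely so that a crude triangle-inequality estimate on an incomplete block of $\Psi_k$ (which may contain up to $2\cdot3^k-1$ rank-one summands, each of cb-norm $\le1$) still contributes at most $1$ to $\|S_m\|_{cb}$. The only thing to be careful about is the bookkeeping: one must verify that the final bound $3c+1$ is genuinely independent of $m$ --- equivalently, of $k$, $p$, and $q$.
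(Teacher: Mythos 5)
Your proposal is correct and follows essentially the same route as the paper: the same decomposition of $S_m$ into $\sum_{r=1}^{k-1}\Psi_r$, a fractional multiple $\frac{p}{(2\cdot3^k-1)^2}\Psi_k$, and a partial block estimated by the triangle inequality against the over-normalization $(2\cdot3^k-1)^{-2}$, yielding the same bound $3\sup_k\|\Phi_{1/\sqrt k,k}\|_{cb}+1$. You in fact spell out two details the paper leaves implicit (why $\sup_k\|\Phi_{1/\sqrt k,k}\|_{cb}<\infty$ via unital complete positivity plus Lemma~\ref{P.2.2}, and why each rank-one summand has cb-norm at most $1$), which is welcome.
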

\begin{proof}
We need to show that the initial sums
$
S_m= \sum_{n=1} ^m (x_n \otimes f_n)
$
 are completely bounded maps on $C^*_r(\F_2)$ with $\sup \|S_m\|_{cb} < \infty$.
Let $m \in \N$ be a   positive integer. As we have seen in the proof of Lemma \ref {P.2.3} that we can write
$m= \sum_{r=1}^{k-1} (2\cdot 3^r-1)^3 + j$ for some  $k\in \N$ and $1\le j \le (2 \cdot 3^k -1)^3$, and
there exists $0\le p\le  (2\cdot3^k-1)^2 -1$ and $1\le q \le 2\cdot 3^k - 1$ such that
\begin{eqnarray*}
\|S_m\|_{cb} &=&  \| \sum_{n=1}^m x_n\otimes f_n \|_{cb}
= \|\sum_{r=1}^{k-1}\sum_{i=1}^{(2\cdot3^r-1)^3} (x_{r,i}\otimes f_{r,i})+\sum_{t=1}^j (x_{k,t}\otimes f_{k,t})\|_{cb}\\
&=&\| \sum_{r=1}^{k-1} \Psi_r+ \frac{p}{(2\cdot 3^k-1)^2}\Psi_k
 + \frac{1}{(2\cdot 3^k-1)^2}\sum_{i =1}^q (y_{k,i}\otimes g_{k,i}) \|_{cb}\\
&\le& \|\Phi_{\frac 1 {\sqrt {k-1}}, {k-1}} \|_{cb} + \frac{p}{(2\cdot 3^k-1)^2}\|\Psi_k\| _{cb} +\frac{2\cdot 3^k-1}{(2\cdot 3^k-1)^2} \\
&\le&  3\cdot \sup_k \|\Phi_{\frac 1 {\sqrt k}, k} \|_{cb}  +1.
\end{eqnarray*}

Finally, we let us recall from (\ref{F.2.1b}), (\ref {F.2.2a}) and (\ref{F.2.7a})  that we can write
\[
\Psi_1 = \sum_{n=1}^{5^3} (x_{n}\otimes f_{n})  = \sum_{i=1}^{5^3} (x_{1,i}\otimes f_{1,i})
=  \sum_{l=1}^{5^2}( \sum_{j=1}^{5}  (y^l_{1,j}\otimes \frac {g^l_{1,j}} {5^2}))
\]
with $y^l_{1,j} = y_{1,j} = \lambda_s$ and
$g^l_{1,j} = g_{1,j} =  \frac{ {\rm e}^{-|s|}}{5^2} \delta_s $ for corresponding $|s| \le 1$.
For $k \ge 2$, we can write
\begin{eqnarray*}
\Psi_k &=& \sum_{n= \sum_{r=1}^{k-1} (2\cdot 3^r-1)^3 + 1}^{ \sum_{r=1}^{k} (2\cdot 3^r-1)^3} (x_n \otimes f_n)
=  \sum_{i=1}^{(2\cdot3^k-1)^3}(x_{k,i}\otimes f_{k,i})
=\sum_{l=1}^{(2\cdot 3^k-1)^2} (\sum_{j=1}^{2\cdot 3^k-1} (y^l_{k,j}\otimes \frac {g^l_{k,j}}{(2\cdot 3^k-1)^2}))
\end{eqnarray*}
with $y^l_{k, j} = y_{k, j} = \lambda_s$ and $g^l_{k,j} = g_{k, j} = \frac {( {\rm e}^{-|s|/\sqrt{k}}- {\rm e}^{-|s|/\sqrt{k-1}})}{(2\cdot 3^k-1)^2} \delta_s $ or
$\frac {{\rm e}^{-|s|/\sqrt{k}} }{(2\cdot 3^k-1)^2}  \delta_s$ for corresponding $|s| \le k$.
So for each $s\in \F_2$, there are many positive integers $n\in \N$ such that $x_n = \lambda_s$.
We let $\varphi :  n\in \N \to s\in \F_2 $  be  the map such that $x_n = \lambda_s$, and let $a_n $ be the coefficient for the corresponding $\delta_s$.
Then it is easy to see that statement  (i) and (ii)  in theorem  hold true.
\end{proof}

\begin{remark}\label{re:2}
The cb-frame $(x_n,f_n)$ for $C_r^*(\F_2)$ in Theorem \ref{pp:1} is not unconditional,
that is, the series $x=\sum_{n=1}^\infty f_n(x)x_n$ does not converge unconditionally in norm for each $x$ in $C_r^*(\F_2)$ (see \cite{CHL}).
If it is unconditional,
then  for each $x \in  C_r^*(\F_2)$ the infinite series
\[
x= \sum_{n=1}^\infty (x_n \otimes f_n)(x) =  \sum_{n=1}^\infty a_n\delta_{\varphi(n)}(x)\lambda_{\varphi(n)}
\]
 converges unconditionally.
In this case,  we can rearrange its order such that
\begin{equation}\label {F.2.9}
x=\sum_{n=1}^\infty a_n\delta_{\varphi(n)}(x)\lambda_{\varphi(n)}
=\sum_{k=0}^\infty( \sum_{|s|=k}(\sum_{\varphi(n)=s}a_n) \delta_s(x)\lambda_s).
\end{equation}
We note that  for any $s \neq e$ in $\F_2$ the summation  $\sum_{\varphi(n)=s}a_n$ in  the last term of (\ref {F.2.9})
is a positive infinite series with $\sum_{\varphi(n)=s}a_n = 1$.
The last equality  makes sense since we can apply an $\varepsilon$-argument to replace such an infinite sum by a finite sum if necessary.
 So we can conclude from (\ref {F.2.9}) that
 \[
 x = \sum_{k=0}^\infty( \sum_{|s|=k}(\sum_{\varphi(n)=s}a_n) \delta_s(x)\lambda_s) =
 \sum_{k=0}^\infty( \sum_{|s|=k} \delta_s(x)\lambda_s)  = \sum_{k=0}^\infty P_k(x).
 \]
This  implies that for any $x\in  C({\mathbb T}) \cong C_r^*(\Z)\hookrightarrow C_r^*(\F_2)$
\[x = \sum_{k=0}^\infty P_k(x)=\lim_{K\to\infty}\sum_{k=0}^K P_k(x)=\lim_{K\to\infty}\sum_{j=-K}^K a_j z^j\] converges uniformly
in $C({\mathbb T})$.
This  is impossible. At this moment, it is not known whether there is any unconditional frame for $C_r^*(F_2)$.
\end{remark}

\section{Cb-frame, CBAP and complemented embedding property}

In this section, we prove that a separable operator space $X$ has  a cb-frame  if and only if it has  the CBAP if and only if
it is completely isomorphic to a completely complemented subspace of an operator space with a cb-basis.
We separate it to two results.

\begin{theorem}\label{th:1}
A separable  operator space $X$ has a cb-frame if and only if $X$ has the
CBAP.
\end{theorem}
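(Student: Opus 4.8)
The plan is to prove the two implications separately; the implication ``cb-frame $\Rightarrow$ CBAP'' is immediate, while the converse is an abstract version of the construction already carried out for $C_r^*(\F_2)$ in Section 2. For the easy direction, given a cb-frame $(x_n,f_n)$ I would set $\Phi_k=S_k=\sum_{n=1}^k x_n\otimes f_n$: each $\Phi_k$ is finite-rank, the cb-frame hypothesis gives $\sup_k\|\Phi_k\|_{cb}=\sup_k\|S_k\|_{cb}<\infty$, and the defining identity $x=\sum_n f_n(x)x_n$ says precisely that $\Phi_k(x)\to x$ for every $x\in X$, which is the CBAP.

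For the converse, I would start from a sequence $(\Phi_k)$ witnessing the CBAP, with $\lambda:=\sup_k\|\Phi_k\|_{cb}<\infty$ and $\Phi_k(x)\to x$ for all $x$, put $\Phi_0=0$ and $\Psi_k=\Phi_k-\Phi_{k-1}$, so that $\sum_{k=1}^\infty\Psi_k(x)=x$, each $\Psi_k$ is finite-rank with $\|\Psi_k\|_{cb}\le 2\lambda$, and $\sum_{r=1}^{k}\Psi_r=\Phi_k$. Next I would decompose each finite-rank map as $\Psi_k=\sum_{j=1}^{d_k}(e_{k,j}\otimes\phi_{k,j})$ with $e_{k,j}\in X$ and $\phi_{k,j}\in X^*$ (a basis of the finite-dimensional range of $\Psi_k$ together with the associated coordinate functionals), record $\Sigma_k:=\sum_{j=1}^{d_k}\|e_{k,j}\|\,\|\phi_{k,j}\|_{cb}<\infty$, and fix an integer $N_k\ge d_k$ with $\Sigma_k/N_k^2\le 2^{-k}$; after padding with zero terms I may assume $d_k=N_k$.

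The heart of the construction is the ``repetition-and-rescaling'' trick of Section 2: list the term $e_{k,j}\otimes\phi_{k,j}$ exactly $N_k^2$ times, each divided by $N_k^2$, in the order $i=pN_k+j$ with $0\le p\le N_k^2-1$ and $1\le j\le N_k$, so that $\Psi_k=\sum_{i=1}^{N_k^3}(x_{k,i}\otimes f_{k,i})$, and then concatenate these blocks over $k$ into a single sequence $(x_n,f_n)\subset X\times X^*$. A partial sum $S_m$ that stops inside block $k$ at position $(p,q)$ equals $\Phi_{k-1}+\tfrac{p}{N_k^2}\Psi_k+\tfrac{1}{N_k^2}\sum_{j=1}^q(e_{k,j}\otimes\phi_{k,j})$, from which I read off $\|S_m\|_{cb}\le\lambda+2\lambda+\Sigma_k/N_k^2\le 3\lambda+1$; and since $\Phi_{k-1}+\Psi_k=\Phi_k$, $\|x-S_m(x)\|\le\|x-\Phi_k(x)\|+\|\Psi_k(x)\|+2^{-k}\|x\|\to 0$ as $m\to\infty$, because $\Phi_k(x)\to x$ forces both $\|x-\Phi_k(x)\|$ and $\|\Psi_k(x)\|=\|\Phi_k(x)-\Phi_{k-1}(x)\|$ to $0$. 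This yields the frame identity together with $\sup_m\|S_m\|_{cb}<\infty$, i.e.\ a cb-frame for $X$.

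The step needing the most care is calibrating $N_k$: the interior partial sums of block $k$ force $\Sigma_k\le N_k^2$, while the remainder term $\Sigma_k/N_k^2$ must still vanish as $k\to\infty$ — this is exactly what produces the cubic block length $N_k^3$ and mirrors the $2\cdot 3^k-1$ bookkeeping of Section 2. Beyond that the proof is formal: a rank-one map $x\otimes f$ is completely bounded with $\|x\otimes f\|_{cb}\le\|x\|\,\|f\|_{cb}$ (and $\|f\|_{cb}=\|f\|$ for a scalar functional), finite sums of completely bounded maps are completely bounded, and the telescoping $\sum_{r=1}^k\Psi_r=\Phi_k$ aligns the block boundaries of the partial sums with the uniformly completely bounded maps $\Phi_k$. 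I do not expect a genuine obstruction here: the statement and its proof are the operator-space transcription of the Casazza--Han--Larson theorem, and the computational core has already been executed in the $C_r^*(\F_2)$ construction.
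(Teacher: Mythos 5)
Your proof is correct and follows essentially the same route as the paper: telescope the CBAP maps into $\Psi_k=\Phi_k-\Phi_{k-1}$, write each $\Psi_k$ as a finite sum of rank-one maps, and run the repetition-and-rescaling bookkeeping of the $C_r^*(\F_2)$ construction to get uniformly completely bounded partial sums. The only (cosmetic) difference is that where you calibrate the repetition count $N_k$ so that $\Sigma_k/N_k^2\le 2^{-k}$, the paper instead normalizes the rank-one decomposition via an Auerbach basis of $\Psi_k(X)$ (giving $\|y_{k,j}\|\le 1$, $\|g_{k,j}\|\le 2K$) and repeats $m(k)^2$ times with $m(k)=\dim\Psi_k(X)$; both devices make the interior partial-sum error small in exactly the same way.
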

\begin{proof} The ``only if'' part is obvious. We only need to  prove the ``if'' part.
The proof is motivated by Pe{\l}czy\'{n}ski's decomposition technique.
Suppose that   $X$ has  the CBAP.
Then  there is a sequence of finite-rank maps $(\Phi_k)$ on $X$ such that  $\sup_k
\|\Phi_k\|_{cb} \le K< \infty$ and $x=\lim_{k\to\infty} \Phi_k(x)$ for all $x\in X.$
Let $\Psi_1=\Phi_1 = \Phi_1 - \Phi_0$ (with $\Phi_0 = 0$) and $\Psi_k=\Phi_k-\Phi_{k-1}$ for $k\ge 2$.
Then $(\Psi_k)$ is a sequence of finite-rank maps on $X$ such that
$\sup_k \|\Psi_k\|_{cb} \le 2K < \infty$ and
$x=\sum_{k=1}^\infty \Psi_k(x)$
for all $x\in X$. Let $m(k)$ be the dimension of  $\Psi_k(X)$. It is known from Auerbach theorem that there exists a biorthogonal basis
 $(y_{k, j},  y^*_{k, j})_{1\le j \le m(k)}  $ for  $\Psi_k(X) $ such
 that $\|y_{k,j}\| \le 1$, $\|y^*_{k,j}\|\le 1$ and $\langle y_{k,i}, y^*_{k,j}\rangle = \delta_{ij}$.
For each $1\le j \le m(k)$, $g_{k,j} = y^*_{k,j} \circ \Psi_k $   is a bounded linear functional on $X$ such that $\|g_{k,j}\| \le 2K$.
We get  $(y_{k,j}, g_{k,j})_{1\le j\le m(k)}$ in $X \times X^*$ such that $\Psi_k=\sum_{j=1}^{m(k)} y_{k,j}\otimes g_{k,j}$
satisfying $\|y_{k,j}\|\le 1$ and $\|g_{k,j}\|\le 2K$.
As we have discussed before Lemma \ref {P.2.3},  we  define $x_{k, i} = y_{k,j}$ and
$f_{k, i} = \frac {g_{k,j}}{m(k)^2}$ when $i = p \cdot m(k) + j$ with $0\le p \le m(k)^2 -1$ and $1\le j \le m(k)$.
Then we can use a similar argument as that given in Lemma \ref {P.2.3} and Lemma \ref {P.2.4} to show
 that if we let  $x_n = x_{k, i}$ and $f_n = f_{k,i}$ when
$n= \sum_{r=0}^{k-1} m(r)^3 + i$ and   $1\le i \le m(k)^3$, then $(x_n, f_n)$ is a cb-frame for $X$.
\end{proof}

\begin{theorem}\label{th:2}
An operator space $X$ has a cb-frame if and only if $X$ is
completely isomorphic to a completely complemented subspace of an
operator space with a cc-basis.
\end{theorem}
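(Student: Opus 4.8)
The plan is to prove both implications. For the ``only if'' direction, suppose $(x_n,f_n)$ is a cb-frame for $X$ with $\sup_m\|S_m\|_{cb}=\Lambda<\infty$, where $S_m=\sum_{n=1}^m x_n\otimes f_n$. Following the classical Johnson--Rosenthal--Zippin argument adapted to the operator space setting, I would build an ambient space $Z$ as a direct sum $Z=\big(\sum_{n=1}^\infty \C x_n\big)$ completed in an appropriate operator space norm, together with two completely bounded maps: an embedding $u\colon X\to Z$ sending $x\mapsto (f_n(x)x_n)_n$, and a projection-type map $P\colon Z\to X$. The point is to equip $Z$ with the natural ``summing basis'' operator space structure for which the coordinate projections $Q_m\colon (z_n)_n\mapsto(z_1,\dots,z_m,0,\dots)$ are uniformly completely bounded; concretely one takes $Z$ to be the completion of finitely supported sequences $(z_n)$ under $\|(z_n)\|_Z=\sup_m\|\sum_{n=1}^m z_n\|_X$ with matrix norms defined the same way using $M_k(X)$-norms, so that $(x_n)$ (or rather the summing variant) becomes a cc-basis of $Z$ with constant $1$. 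One checks that $u$ is completely bounded with completely bounded inverse on its range because $S_m u = $ (the truncation) and $S_m\to \mathrm{id}_X$ pointwise in norm, while $P(z)=\lim_m$ of partial sums composed through the frame reconstruction is completely bounded by $\Lambda$, and $P u=\mathrm{id}_X$. Hence $X$ is completely isomorphic to the completely complemented subspace $u(X)=\ker(\mathrm{id}-uP)$ of $Z$.

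For the ``if'' direction, suppose $X$ is completely isomorphic to a completely complemented subspace $Y$ of an operator space $W$ with a cc-basis $(e_n)$; let $P\colon W\to Y$ be a completely bounded projection and let $T\colon X\to Y$ be a complete isomorphism. The canonical projections $P_m$ of $W$ satisfy $\sup_m\|P_m\|_{cb}=1$ (cc-basis), so $\Delta_m:=P_m-P_{m-1}$ are rank-one maps $w\mapsto e_n^*(w)e_n$ with $\sum_m \Delta_m=\mathrm{id}_W$ pointwise. Then $\Phi_m:=T^{-1}\circ P\circ (P_m|_Y)\circ T$ is a sequence of finite-rank maps on $X$ with $\sup_m\|\Phi_m\|_{cb}<\infty$ converging to $\mathrm{id}_X$ in the point-norm topology, i.e.\ $X$ has the CBAP. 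By Theorem \ref{th:1}, $X$ has a cb-frame. Alternatively, and more in the spirit of producing the frame directly, one pulls back the basis functionals: write $P(w)=\sum_n e_n^*(w)\,P(e_n)$ and set $x_n = T^{-1}P(e_n)$, $f_n = e_n^*\circ T$ (suitably renormalized by the $m(k)^2$-trick as in Theorem \ref{th:1} to get a genuine frame rather than a finite-dimensional blocking), obtaining a cb-frame for $X$ whose partial sums are controlled by $\|T\|_{cb}\|T^{-1}\|_{cb}\|P\|_{cb}\sup_m\|P_m\|_{cb}$.

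I would organize the write-up so that the ``if'' direction is essentially a one-line reduction to Theorem \ref{th:1}, and concentrate the work on the ``only if'' direction, namely the explicit construction of the space $Z$ with a cc-basis. The main obstacle is purely operator-space-theoretic bookkeeping: one must verify that the matricial norms $\|\,\cdot\,\|_{M_k(Z)}$ defined by $\|[z^{(ij)}]\|=\sup_m\big\|\sum_{n=1}^m [z^{(ij)}_n]\big\|_{M_k(X)}$ genuinely satisfy Ruan's axioms (so that $Z$ is an operator space), that $(x_n)$ together with the summing-basis reordering is a Schauder basis for $Z$ with uniformly cb partial sums, and that $u$ and $P$ are completely bounded with the claimed bounds. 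None of these steps is deep, but assembling them cleanly — especially reconciling the ``frame $\to$ basis'' blocking (the $m(k)^2$ repetition device from Theorem \ref{th:1}) with the summing-basis structure so that the resulting basis is actually a cc-basis and not merely a cb-basis — requires care. I would also note (as Remark \ref{re:1} promises) that the cc-basis so constructed is minimal, which follows because each basis vector $x_n$ has norm $1$ and the biorthogonal functionals are uniformly bounded.
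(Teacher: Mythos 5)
Your proposal is correct and follows essentially the same route as the paper: your ambient space (the completion of finitely supported sequences under the matricial sup-of-partial-sums norm $|\|u|\|_n=\sup_{m}\|\sum_{i=1}^m u_i\otimes x_i\|_n$) is exactly the paper's space $Y$, your maps $u$ and $P$ are the paper's $T$ and $Q$ with $QT=\mathrm{id}_X$ and $TQ$ the completely bounded projection onto $T(X)$, and the ``if'' direction is the same one-line reduction through the CBAP and Theorem \ref{th:1}. The only superfluous concern is the $m(k)^2$ repetition device: that blocking is needed only in Theorem \ref{th:1} to manufacture a frame from the CBAP, whereas here the cb-frame is already given and the sup-of-partial-sums norm directly makes $(e_i)$ a cc-basis.
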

\begin{proof} The ``if" part is obvious since every completely complemented
subspace of an operator space with the CBAP always has the CBAP.
We only need to prove the ``only if'' part.
Let $(x_i, f_i)\in X \times X^*$ be a cb-frame of $X$
with $\|x_i\|=1$ for all  $i\in \N$.
Let $c_{00}$ be the linear space of
all sequences of complex numbers with finitely many nonzeros, and
$(e_i)$ be the canonical basis of $c_{00}$.
For any $u\in \mathbb{M}_n(c_{00})\cong c_{00}(\M_n)$, there is a unique
linear expression $u=\sum u_i\otimes e_i$ with finitely many
$u_i\neq 0$ in $\M_n$. We define a norm $|\|\cdot|\|_n$ on $\mathbb{M}_n(c_{00})$ as follows:
\begin{equation}\label{eq:9}
|\|u|\|_n=\sup_{m\ge 1}\|\sum_{i=1}^m u_i\otimes x_i\|_n=
\max_{m\ge 1} \|\sum_{i=1}^m u_i\otimes
x_i\|_n<\infty.
\end{equation}
More precisely  for any $u,v\in\M_n(c_{00})$, we have
\begin{eqnarray*}
  |\|u+v|\|_n =
    \max_{m\ge 1} \|\sum_{i=1}^m(u_i+v_i)\otimes x_i \|_n
   \le \max_{m\ge 1}  ( \,  \|\sum_{i=1}^m u_i\otimes x_i \|_n+ \|\sum_{i=1}^m v_i\otimes x_i \|_n )
   \le |\|u\|_n+|\|v\|_n.
\end{eqnarray*}
If $|\|u|\|_n=0$, then $\|\sum_{i=1}^m u_i\otimes x_i\|_n=0$ for all $m\in\N$. By induction,
we can conclude $u=0$.
This shows that  each $|\|\cdot|\|_n$ is a norm on
$\M_n(c_{00})$. Moreover, $|\|\cdot|\|_n$ satisfies the following
properties:
\begin{enumerate}
\item[(N1)] \, $|\|u\otimes e_i|\|_n=\|u\otimes
x_i\|_n=\|u\|_{n}$ for all $u\in \M_n$ and $i\in\N$.
\item[(N2)] \, For any $u_i\in \M_n$ and $m\le l$,
we have $|\|\sum_{i=1}^m u_i\otimes e_i|\|_n\le |\|\sum_{i=1}^l
u_i\otimes e_i|\|_n.$
\item[(N3)] \, For any $x \in X$, $(\sum_{i=1}^n
f_i(x)e_i)_{n=1}^\infty$ is a Cauchy sequence in
$(c_{00},|\|\cdot|\|_1)$.
\end{enumerate}
Since (N1) and (N2) are obvious, we only need to prove (N3).
Since $(x_i, f_i)$ is a frame for $X$, for any $x \in X$, we have $x=\sum_{i=1}^\infty f_i(x)x_i$.
 Then for any $\epsilon>0$, there is $n_0$ such that,
for any $m > n\ge  n_0$, we have $\|\sum_{i=n+1}^m f_i(x)x_i\|<\epsilon.$
Now if we let $y_n=\sum_{i=1}^n f_i(x)e_i$ for each $n \in \N$,  we get
\begin{equation*}
    |\|y_m-y_n|\|_1= |\|\sum_{i=n+1}^m f_i(x)e_i |\|_1=\max_{n+1\le k\le m} \|\sum_{i=n+1}^k f_i(x)x_i \|<\epsilon.
\end{equation*}
This shows (N3).

Now we prove that $|\|\cdot|\|_n$ is actually an operator space matrix
norm. For  $u=\sum u_i\otimes e_i\in \mathbb{M}_n(c_{00}), w=\sum
w_i\otimes e_i\in \mathbb{M}_m(c_{00})$, and $\alpha\in
\M_{n},\beta\in \M_{n},$ we have
\begin{eqnarray}
|\|u\oplus w|\|_{n+m}&=& |\|\sum (u_i\oplus w_i)\otimes e_i |\|_{ n+m}
= \max_{l\ge 1} \|\sum_{i=1}^l (u_i\oplus w_i)\otimes
x_i \|_{n+m}  \nonumber \\
&=&\max_{l\ge 1} \| (\sum_{i=1}^l
u_i\otimes x_i )\oplus (\sum_{i=1}^l w_i\otimes
x_i ) \|_{n+m}  \nonumber \\
&=& \max_{l\ge 1} \max \Big\{ \|\sum_{i=1}^l u_i\otimes
x_i \|_n , \|\sum_{i=1}^l w_i\otimes
x_i \|_m  \Big\}\nonumber\\
&=&\max \Big\{\max_{l\ge 1} \|\sum_{i=1}^l u_i\otimes
x_i \|_n ,\max_{l\ge 1} \|\sum_{i=1}^l w_i\otimes
x_i \|_m \Big\} \nonumber \\
&=&  \max\{\,|\|u|\|_n,|\|w|\|_m\},
\end{eqnarray}
and
\begin{eqnarray}
|\|\alpha u \beta|\|_n&=& |\|\alpha (\sum u_i\otimes
e_i )\beta |\|_n= |\|\sum (\alpha u_i \beta)\otimes
e_i |\|_n\nonumber\\
&=&\max_{l\ge 1}  \|\sum_{i=1}^l (\alpha u_i \beta)\otimes
x_i \|_n =\max_{l}  \|\alpha (\sum_{i=1}^l
u_i\otimes x_i )\beta \|_n \nonumber\\
&\le&\max_{l\ge 1} \|\alpha\|_{{n}} \|\sum_{i=1}^l u_i\otimes
x_i \|_n \|\beta\|_{{n}} = \|\alpha\|_{{n}}|\|u|\|_n\|\beta\|_{{n}}.
\end{eqnarray}
Thus, by the abstract characterization theorem given in \cite{Ru}, this newly defined
 matricial norm
$\{|\|\cdot|\|_n\}$ determines an operator space structure on
$c_{00}$. We let  $Y= {{c_{00}}}^{-|\|\cdot |\|}$ denote the completion.
It is known from \cite[Fact 6.3] {FHHSPZ} that $(e_i)$ is a basis for $Y$.
According to (N2), $(e_i)$ is actually a cc-basis for $Y$ since for any
$u=\sum u_i\otimes e_i\in \mathbb{M}_n(c_{00}),$
the natural projections  $P_m$ satisfy 
%
\[
\|(P_m)_n(u)\|_n= |\|\sum_{i=1}^m u_i\otimes e_i |\|_n\le|\|u|\|_n.
\]
%

Now let us define  a linear map
\begin{equation}\label {F.Q.1}
Q :   \sum \alpha_i e_i \in c_{00}\to \sum \alpha_i x_i \in X.
\end{equation}
 For any $u = \sum u_i\otimes
e_i \in \M_n(c_{00})$, we get
 $$(Q)_n(u)=(Q)_n\big(\sum u_i\otimes
e_i\big)=\sum u_i\otimes x_i$$
and
$$\|(Q)_n(u)\|_n= \|\sum u_i\otimes x_i \| _n \le |\|u|\|_n$$
by (\ref{eq:9}).
Together with (N1), we get $\|Q\|_{cb}= 1$, and $Q$ can be
uniquely extended to the whole space $Y$.
On the other hand, we can define a linear map
\begin{equation}\label {F.T.1}
T:  x = \sum f_i(x)x_i \in X\to \sum f_i(x)e_i \in Y.
\end{equation}
 Then, by (N3), $T$ is well-defined. For any
$x=[x_{j,k}]\in \M_n(X)$, we have
\begin{eqnarray*}
(T)_n(x)&=&[\,T(x_{j,k})]=\big[\sum_{i=1}^\infty
f_i(x_{j,k})e_i\big]=\sum_{i=1}^\infty [ f_i(x_{j,k})]\otimes e_i
\end{eqnarray*}
and thus
\begin{eqnarray*}
|\|(T)_n(x) |\|_n &=&
|\|\sum_{i=1}^\infty [ f_i(x_{j,k})]\otimes e_i|\|_n
= \lim_{m\to\infty} |\|\sum_{i=1}^m [ f_i(x_{j,k})]\otimes e_i|\|_n \\
&=& \lim_{m\to\infty} \sup_{1\le l\le m} \Big\|\sum_{i=1}^l [ f_i(x_{j,k})]\otimes x_i\Big\|_n
 \le   \sup_{l\ge 1}  \|S_l \|_{cb}\|x\|_n.
\end{eqnarray*}
This shows $\|T\|_{cb}\le \sup_{l\ge 1} \left\|S_l\right\|_{cb}<\infty$. Moreover, for all $x\in X$,
$$QT(x)=Q\left(\sum f_i(x)e_i\right)=\sum f_i(x)x_i=x.$$ That is,
$QT=\mathrm{id}_X$. It follows that $Q$ is a surjection from $Y$ onto $X$ and that $T$
is injection from $X$ into $Y$. Since
$$\|x\|_n =\|(QT)_n(x)\|_n =\|(Q)_n(T)_n(x)\|_n \le|\|(T)_n(x)|\|_n$$
for all $x\in \M_n(X)$, $T$ is a complete isomorphism from $X$ onto
$T(X)\subset Y$. Moreover, $TQ$ is a completely bounded projection
from $Y$ onto $TQ(Y)=T(X)$. This completes the proof.
\end{proof}

\begin{remark}
Let $G$ be a  countable discrete group.
Then  by  Theorem \ref{th:1},  $G$ is weakly amenable, or equivalently  the reduced group C*-algebra $C^*_r(G)$ has the CBAP, if and only if $C^*_r(G)$ has a cb-frame.
Let $A(G)$ be the Fourier algebra of $G$.  Then $A(G)$  has a canonial operator space structure obtained by identifying $A(G)$ with the operator predual of the left group von Neumann algebra $VN(G)$.
It can be shown  that  $G$ is weakly amenable if and only if    $A(G)$  (respectively, $A(G)^{\rm op}$)
has the CBAP and thus has a cb-frame.
Then using the complex interpolation method, we can  show that if $G$ is weakly amenable, then  for each $1 < p < \infty$ the non-commutative $L_p$ -space
$L_p(VN(G)) = (VN(G), A(G)^{\rm op})_{\frac 1p}$ (with the canonical operator space structure intoduced by Pisier)
has the CBAP and thus has a cb-frame.
The converse statement is not necessarily true for non-commutative $L_p$ -spaces.
Indeed, it is known from \cite[Proposition 5.2]{JR} that if a countable residually  finite discrete group $G$ has  the AP,
then $L_p(VN(G))$ has a cb-basis.
This contains a very interesting class of groups.  For instance it includes many weakly amenable groups  such as ${\mathbb F}_n$,  $SL(2, {\mathbb Z})$ and $Sp(1, n)$, as well as some non-weakly amenable groups like ${\mathbb Z}^2 \rtimes SL(2, {\mathbb Z})$.

\end{remark}

\begin{remark}\label{re:1}
It is shown in Theorem \ref{th:2} that if $X$ has a cb-frame $(x_i,f_i)$,
then we can construct an operator space $Y$ with a cc-basis $(e_i)$ and completely
bounded maps $Q$ and $T$ satisfying (\ref {F.Q.1}) and (\ref {F.T.1}).
We note that such a  cb-basis $(e_i)$ in $Y$  is a  minimal choice.
Suppose that we have another operator space $Z$ with a cb-basis
$(z_i)$ and  completely bounded maps satisfying  (\ref {F.Q.1}) and (\ref {F.T.1}), i.e.

\[
Q_Z: \sum a_i z_i\in Z\to \sum a_i x_i\in X
~\mbox{and} ~ T_Z: x= \sum f_i(x)x_i \in X \to \sum f_i(x)z_i\in Z.
\]
Then
for any $n\in\N$ and $u_i\in \M_n$ we have
\begin{eqnarray*}
|\|\sum u_i\otimes
e_i |\|_n &=&\max_{m\ge1} \|\sum_{i=1}^m u_i\otimes
x_i \|_n =\max_{m\ge1} \|\sum_{i=1}^m u_i\otimes
Q_Z(z_i) \|_n \\&\le&\|Q_Z\|_{cb}\max_{m\ge1} \|\sum_{i=1}^m
u_i\otimes z_i \|_n \le
\|Q_Z\|_{cb}K_Z \|\sum u_i\otimes z_i \|_n,
\end{eqnarray*}
where $K_Z = \sup _m \|P^Z_m\|_{cb} $ is the cb-constant of the cb-basis $\{z_i\}$.
This shows that  there is a constant $K = \|Q_Z\|_{cb} K_Z >0$ such that
\[ |\|\sum u_i\otimes
e_i |\|_{n}\le K \|\sum u_i\otimes z_i \|_{n}.\]
This shows that the cb-basis $(e_i)$ in $Y$   is
completely dominated by  such  a cb-basis $(z_i)$ in $Z$.
Therefore, the cb-basis $(e_i)$ in $Y$ constructed in Theorem \ref{th:2}
is  a minimal choice associated to the cb-frame $(x_i,f_i)$ in $X$.
\end{remark}

As in Banach space theory, we can also consider the unconditional case.
We say that an unconditional basis $(e_i)$ of an operator space $Y$ is \emph{completely
unconditional}  if
\begin{equation}
\sup_{E\subset\N, \, \#E<\infty}\|P_E\|_{cb}<\infty,
\end{equation}
where $E$ is a finite subset of $\N$ and $P_E$ is the natural  projection defined by
$P_E(\sum \alpha_ie_i)=\sum_{i\in E} \alpha_ie_i$.
We say that $(e_i)$ is \emph{completely 1-unconditional}  if $\sup_{E\subset\N, \, \#E<\infty}\|P_E\|_{cb}=1$,
or equivalently, $\|P_E\|_{cb}=1$ for any finite subset $E\subset\N$
(see  \cite{Oi}).
An unconditional frame
$(x_i,f_i)$ of an operator space $X$ (see Remark \ref{re:2}) is \emph{completely unconditional} if
\begin{equation}
\sup_{E\subset\N, \, \#E<\infty}\|S_E\|_{cb}<\infty,
\end{equation}
where $S_E$ is the natural partial sum map defined by
$S_E(x)=\sum_{i\in E} f_i(x)x_i$.

The following result is the unconditional version of Theorem \ref{th:2}, which
is also the operator space version of  Theorem 3.6 in \cite  {CHL}
\begin{theorem}\label{th:5}
An operator space $X$ has a completely unconditional cb-frame if and
only if $X$ is completely isomorphic to a completely complemented
subspace of an operator space with a completely 1-unconditional
cb-basis.
\end{theorem}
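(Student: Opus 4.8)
The plan is to adapt the proof of Theorem \ref{th:2} by replacing the auxiliary norm $|\|\cdot|\|_n$ defined via initial sums with one defined via arbitrary finite subsets, so that the resulting operator space $Y$ carries a completely $1$-unconditional cb-basis rather than merely a cc-basis. The ``if'' direction is again trivial: a completely complemented subspace of an operator space with the CBAP has the CBAP, and the completely unconditional structure is inherited upon passing to the complemented subspace (the partial sum maps $S_E$ on $X$ are just conjugates of the truncated projections $P_E$ on $Y$ by the bounded embedding and projection, so $\sup_{E}\|S_E\|_{cb}<\infty$).

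For the ``only if'' direction, suppose $(x_i,f_i)$ is a completely unconditional cb-frame for $X$ with $\|x_i\|=1$. On $\M_n(c_{00})$, for $u=\sum u_i\otimes e_i$ I would define
\[
|\|u|\|_n=\sup_{E\subset\N,\ \#E<\infty}\Big\|\sum_{i\in E}u_i\otimes x_i\Big\|_n,
\]
the supremum being a maximum over the finite support of $u$. Exactly as in Theorem \ref{th:2}, one checks this is a norm (if it vanishes then $\|\sum_{i\in E}u_i\otimes x_i\|=0$ for every $E$, in particular every singleton, forcing each $u_i=0$ by (N1)-type reasoning), that it satisfies Ruan's axioms (the direct-sum and $\alpha u\beta$ computations go through verbatim, since taking a supremum over finite subsets commutes with direct sums and with left/right multiplication), and that it defines an operator space structure; let $Y$ be the completion of $c_{00}$. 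The key gain is the replacement of property (N2): for \emph{any} finite $E\subset\N$ and any $v=\sum v_i\otimes e_i\in\M_n(c_{00})$,
\[
|\|(P_E)_n(v)|\|_n=\sup_{F\subset\N,\ \#F<\infty}\Big\|\sum_{i\in F\cap E}v_i\otimes x_i\Big\|_n\le \sup_{F}\Big\|\sum_{i\in F}v_i\otimes x_i\Big\|_n=|\|v|\|_n,
\]
so $\|P_E\|_{cb}\le 1$ for every finite $E$; combined with the fact that $(e_i)$ is a basis of $Y$ (by \cite[Fact 6.3]{FHHSPZ}, whose hypotheses hold here just as in Theorem \ref{th:2}), this says $(e_i)$ is a completely $1$-unconditional cb-basis of $Y$.

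It remains to produce the maps $Q$ and $T$. I define $Q:\sum\alpha_ie_i\mapsto\sum\alpha_ix_i$; then $\|(Q)_n(u)\|_n=\|\sum u_i\otimes x_i\|_n\le|\|u|\|_n$ (the full support $E$ being one admissible choice in the supremum), and with the singleton case giving the reverse inequality on one-term elements, $\|Q\|_{cb}=1$, and $Q$ extends to $Y$. For $T:x=\sum f_i(x)x_i\mapsto\sum f_i(x)e_i$, well-definedness requires the analogue of (N3): for $x\in X$ the net $(\sum_{i\in E}f_i(x)e_i)_E$ is Cauchy in $(c_{00},|\|\cdot|\|_1)$, which is exactly where \emph{unconditional} convergence of the frame series $x=\sum f_i(x)x_i$ is used — for $E,F$ disjoint finite sets far out in $\N$, $|\|\sum_{i\in E}f_i(x)e_i|\|_1=\sup_{G}\|\sum_{i\in E\cap G}f_i(x)x_i\|$ is small because every sub-sum of the tail is small. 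For the cb-norm of $T$ one estimates, for $x=[x_{j,k}]\in\M_n(X)$,
\[
|\|(T)_n(x)|\|_n=\sup_{E}\Big\|\sum_{i\in E}[f_i(x_{j,k})]\otimes x_i\Big\|_n=\sup_E\|(S_E)_n(x)\|_n\le\Big(\sup_{E}\|S_E\|_{cb}\Big)\|x\|_n,
\]
which is finite precisely because the frame is completely unconditional. Finally $QT=\mathrm{id}_X$, so $T$ is a complete isomorphism of $X$ onto $T(X)\subset Y$, $\|x\|_n=\|(Q)_n(T)_n(x)\|_n\le|\|(T)_n(x)|\|_n$ gives the lower bound, and $TQ$ is a completely bounded idempotent onto $T(X)$, exhibiting $X$ as a completely complemented subspace of $Y$. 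The main obstacle, and the only substantive departure from Theorem \ref{th:2}, is verifying the Cauchy property (N3) and the $T$-estimate with finite subsets in place of initial segments; both reduce cleanly to the hypothesis of complete unconditionality of the frame, so no genuinely new difficulty arises.
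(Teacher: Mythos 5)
Your proposal is correct and follows essentially the same route as the paper: the paper's own proof of Theorem \ref{th:5} consists precisely of replacing the initial-sum norm of Theorem \ref{th:2} by the norm $|\|u|\|_n=\max_{E\subset\N,\,\#E<\infty}\|\sum_{i\in E}u_i\otimes x_i\|_n$, recording the analogues (U1)--(U3) of (N1)--(N3), and noting that the remainder of the argument (Ruan's axioms, the maps $Q$ and $T$, and $QT=\mathrm{id}_X$) carries over verbatim. You in fact supply more detail than the paper does on the two points where unconditionality is genuinely used --- the Cauchy property (U3) via smallness of arbitrary sub-sums of the tail, and the bound $\|T\|_{cb}\le\sup_E\|S_E\|_{cb}$ --- and your treatment of both is correct.
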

\begin{proof} Because the whole proof is similar to that of Theorem
\ref{th:2}, we only give out the construction of the matrix norm.
Without loss of generality, we assume that $(x_i,f_i)$ is a completely unconditional cb-frame
of $X$ with $\|x_i\|=1$ for all $i\in \N$.
Let   $(e_i)$  be  the unit vector basis of $c_{00}$.
For any $u\in \mathbb{M}_n(c_{00})$, there is a unique linear expression
$u=\sum u_i\otimes e_i$ where $u_i\in\M_n$ we can define
\begin{equation*}
|\|u|\|_n=|\|\sum u_i\otimes e_i |\|_n=\max_{E\subset\N, \,
\#E<\infty}\big\|\sum_{i\in E} u_i\otimes x_i\big\|_{n}.
\end{equation*}
It is easy to verify  that $|\|\cdot |\|_n$ is a norm on
$\mathbb{M}_n(c_{00})$ for each $n\in\N$, which satisfy
\begin{enumerate}
\item[$\mathbf{(U1)}$ ] $|\|u\otimes e_i|\|_n=\|u\otimes
x_i\|_{n}=\|u\|_{n}\|x_i\|$ for all $u\in \M_n$ and
$i\in\N$,
\item[$\mathbf{(U2)}$ ] For any $u_i\in \M_n$ and $E\subset\N$ with
$\#E<\infty$, we have $$|\|\sum_{i\in E} u_i\otimes
e_i |\|_n\le |\|\sum u_i\otimes e_i  |\|_n,$$
\item[$\mathbf{(U3)}$ ] For any $x \in X$, $(\sum_{i=1}^n
f_i(x)e_i)_{n=1}^\infty$ is a Cauchy sequence in
$(c_{00}, |\|\cdot |\|_1)$.
\end{enumerate}
Then the rest of proof is similar to that given for  Theorem \ref{th:2}.
\end{proof}


\section {Subspaces of  operator spaces with a cb-basis}

The  Banach-Mazur theorem shows   that every separable
Banach space can be isometrically embedded into $C[0,1]$.
Therefore, every  separable Banach space  can be isometrically embedded into a Banach space with a Schauder basis.
However, the corresponding result  is not  true for operator spaces.
We   show in  Theorem \ref {th:4}  that the  Hilbertian operator space $X_0$ constructed by Oikhberg and Ricard \cite {OR}
can not be completely isomorphic to any  subspace of an operator space with a cb-basis (or with a cb-frame).

Let us  first develop some notations and preliminary results.  Most of these  are motivated from  Banach space theory.
Let $X$ be an operator space.  A sequence $(x_i)\subset X$ is called a \textit{cb-basic sequence} in $X$ if $(x_i)$ is a
cb-basis of $\overline{\mathrm{span}}(x_i)$, the norm closure of $\mathrm{span} (x_i)$ in $X$.
Thus, any cb-basis is a cb-basic sequence.
Fix a sequence $(x_i)\subset X$, then
 a sequence $(y_i)\subset X$ is said to be a \emph{block sequence} of
 $(x_i)$ if there exists a sequence of positive integers $m_1<m_2<m_3<\cdot\cdot\cdot$
 such that $y_n\in \mathrm{span}(x_i)_{i=m_n}^{m_{n+1}-1}$ for all $n\in\N.$

\begin{proposition}\label{pp:3}
Let $X$ be an operator space.   Then a sequence
$(e_j)$ in $X$  is a cb-basic sequence if and only if there is a constant
$K\ge 1$ such that for any $u_i\in \M_n$ and $m\le l$, we have
\begin{equation}\label{eq:11} \|\sum_{j=1}^m u_j\otimes e_j \|_n\le
K \|\sum_{j=1}^l u_j\otimes e_j \|_n.
\end{equation}
As a consequence, any block sequence of a cb-basic sequence is a cb-basic sequence.
\end{proposition}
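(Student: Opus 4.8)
The plan is to establish this as the operator-space version of the classical Grunblum basis criterion: one runs the usual Banach-space argument to produce the truncation projections and then adds a single matrix-level observation at the end. The ``only if'' direction is immediate. If $(e_j)$ is a cb-basis of $E:=\overline{\mathrm{span}}(e_j)$, set $K:=\sup_m\|P_m\|_{cb}<\infty$; then for $m\le l$ and $u_j\in\M_n$ the element $u:=\sum_{j=1}^l u_j\otimes e_j$ lies in $\M_n(E)$, and since $(P_m)_n(u)=\sum_{j=1}^m u_j\otimes e_j$ we get $\|\sum_{j=1}^m u_j\otimes e_j\|_n\le\|P_m\|_{cb}\,\|u\|_n\le K\,\|\sum_{j=1}^l u_j\otimes e_j\|_n$, which is (\ref{eq:11}).

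For the ``if'' direction, assume (\ref{eq:11}) holds with constant $K$, together with $e_j\neq0$ for all $j$ (as for ordinary basic sequences, this nondegeneracy is part of being a basis). First I would treat the scalar case $n=1$: there (\ref{eq:11}) says precisely that the truncation maps $P_m(\sum_j a_je_j)=\sum_{j=1}^m a_je_j$ satisfy $\|P_m\|\le K$ on $\mathrm{span}(e_j)$ (when $m\ge l$ one has $P_mx=x$), so each $P_m$ extends to a bounded operator on $E$ with $\|P_m\|\le K$. A routine $3\eps$ estimate, using $\|P_mx-x\|\le K\|x-x'\|+\|P_mx'-x'\|+\|x'-x\|$ with $x'\in\mathrm{span}(e_j)$ and $m$ large enough that $P_mx'=x'$, gives $P_mx\to x$ for every $x\in E$. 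Since $P_m-P_{m-1}$ (with $P_0=0$) is a bounded projection, of norm $\le 2K$, onto the one-dimensional space $\C e_m$, each $x\in E$ has a unique expansion $x=\sum_j a_j(x)e_j$ with the $a_j$ continuous; hence $(e_j)$ is a Schauder basis of $E$ with basis constant $\le K$.

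The operator-space content is then the single remaining observation: (\ref{eq:11}) for arbitrary $u_j\in\M_n$ says exactly that $\|(P_m)_n\|\le K$ on $\M_n(\mathrm{span}(e_j))$ for every $n$. Because $\M_n(E)=\overline{\M_n(\mathrm{span}(e_j))}$, this bound propagates to the completion, and the resulting extension of $(P_m)_n$ coincides with the amplification of the map $P_m\colon E\to E$ already constructed, both maps being continuous and agreeing on the dense subspace. Therefore $\|P_m\|_{cb}=\sup_n\|(P_m)_n\|\le K<\infty$, so $(e_j)$ is a cb-basis of $E$, i.e.\ a cb-basic sequence. I expect the only point needing care to be this passage to the completion: one must take the $\M_n$-bound on finite sums \emph{before} closing up $\mathrm{span}(e_j)$, and then verify that the extension is indeed the amplification of $P_m$; everything else is routine.

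For the consequence, let $(x_i)$ be a cb-basic sequence, so by the ``only if'' direction it satisfies (\ref{eq:11}) with some constant $K$. Given a block sequence $y_n=\sum_{i=m_n}^{m_{n+1}-1}c_ix_i$ with $m_1<m_2<\cdots$ and (as usual) each $y_n\neq0$, for $u_n\in\M_N$ and $p\le q$ I would rewrite $\sum_{n=1}^{p}u_n\otimes y_n=\sum_{i<m_{p+1}}\widetilde{u}_i\otimes x_i$, where $\widetilde{u}_i:=c_iu_n$ when $m_n\le i<m_{n+1}$ and $\widetilde{u}_i:=0$ when $i<m_1$; the same $\widetilde{u}_i$ serve for $\sum_{n=1}^{q}u_n\otimes y_n=\sum_{i<m_{q+1}}\widetilde{u}_i\otimes x_i$. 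Since $m_{p+1}-1\le m_{q+1}-1$, applying (\ref{eq:11}) for $(x_i)$ yields $\|\sum_{n=1}^{p}u_n\otimes y_n\|_N\le K\|\sum_{n=1}^{q}u_n\otimes y_n\|_N$, and the ``if'' direction already proved then shows $(y_n)$ is a cb-basic sequence. The only work here is the index bookkeeping in the rewriting.
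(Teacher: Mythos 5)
Your proof is correct and follows essentially the same route as the paper: the scalar case of (\ref{eq:11}) gives that $(e_j)$ is a Schauder basis of $\overline{\mathrm{span}}(e_j)$ (the paper simply cites the classical Grunblum criterion from \cite{FHHSPZ} where you reprove it), and the matrix-level inequality on finite sums then yields $\sup_m\|P_m\|_{cb}\le K$ after passing to the completion. Your explicit index bookkeeping for the block-sequence consequence, which the paper leaves unstated, is also correct.
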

\begin{proof} The necessity is clear. For sufficiency, we only need to prove that
$(e_j)$ is a cb-basis of $\overline{\mathrm{span}}(e_j)$. By
\cite[Proposition 6.13]{FHHSPZ}, $(e_j)$ is a basis of $\overline{\mathrm{span}}(e_j)$.
For any $m,n\in\N$ and $(u_i)\subset\M_n$ with finitely many $u_i \neq 0$, it follows from (\ref{eq:11}) that
\[ \|(P_m)_n (\sum_j u_j\otimes e_j ) \|_n= \|\sum_{j=1}^m u_j\otimes e_j \|_n\le K \|\sum_j u_j\otimes e_j \|_n.\]
This shows that $\sup_m\|P_m\|_{cb}\le K$, and thus, $(e_j)$ is a cb-basic sequence in $X$.
\end{proof}

\begin{proposition}\label{pp:2} Let $X$ be an operator space with a cb-basis $(x_i)$. If \,$Y$
is an infinite-dimensional closed subspace of $X$, then $Y$ contains a cb-basic sequence.

\end{proposition}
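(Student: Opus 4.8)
The plan is to adapt the classical Banach-space gliding-hump (Mazur) argument to the operator-space setting, using Proposition \ref{pp:3} as the criterion for producing a cb-basic sequence. Let $(x_i)$ be a cb-basis of $X$ with cb-constant $K = \sup_m \|P_m\|_{cb} < \infty$, and let $Y \subset X$ be an infinite-dimensional closed subspace. First I would observe that since each $P_m$ has finite rank, for every $\eps > 0$ and every finite-codimensional subspace of the form $\ker P_m$, one can find a norm-one vector $y \in Y$ lying ``almost'' in the tail $\overline{\mathrm{span}}\{x_i : i > m\}$: more precisely, because $Y$ is infinite-dimensional and $P_m(Y)$ is finite-dimensional, there is $y \in Y$ with $\|y\| = 1$ and $\|P_m(y)\|$ arbitrarily small, hence $\|y - (I - P_m)(y)\|$ small.

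The construction proceeds inductively. I would build a normalized sequence $(y_n) \subset Y$ and strictly increasing integers $m_1 < m_2 < \cdots$ such that $y_n$ is within $\eps_n$ (in norm) of a vector $z_n \in \mathrm{span}\{x_i : m_{n-1} < i \le m_n\}$, where $\sum_n \eps_n$ is chosen small relative to $K$. To pass from step $n$ to step $n+1$: having fixed $m_n$, pick $y_{n+1} \in Y$ of norm one with $\|P_{m_n}(y_{n+1})\|$ tiny; then since $y_{n+1} = \sum_i \alpha_i x_i$ converges, choose $m_{n+1} > m_n$ so that the tail $\|(I - P_{m_{n+1}})(y_{n+1})\|$ is also tiny; set $z_{n+1} = (P_{m_{n+1}} - P_{m_n})(y_{n+1})$, which lies in the required block and satisfies $\|y_{n+1} - z_{n+1}\| < \eps_{n+1}$.

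The heart of the argument is then a perturbation lemma: the genuine block sequence $(z_n)$ is automatically a cb-basic sequence (this is the consequence clause of Proposition \ref{pp:3}, since blocks of a cb-basis are cb-basic), with cb-basis constant at most, say, $2K$; and if $(y_n)$ is sufficiently close to $(z_n)$ — quantitatively, if $\sum_n \eps_n$ is small compared with $1/(2K)$ — then $(y_n)$ is again cb-basic. I would verify this by checking the estimate \eqref{eq:11} of Proposition \ref{pp:3} directly: for $u_i \in \M_n$ and $m \le \ell$,
\[
\Bigl\|\sum_{j=1}^m u_j \otimes y_j\Bigr\|_n \le \Bigl\|\sum_{j=1}^m u_j \otimes z_j\Bigr\|_n + \sum_{j=1}^m \|u_j\|_n\,\eps_j,
\]
and one controls $\|u_j\|_n$ by $\bigl\|\sum_{i=1}^\ell u_i \otimes z_i\bigr\|_n$ up to the block-basis constant (because $u_j \otimes z_j = (P^{\mathrm{blocks}}_j - P^{\mathrm{blocks}}_{j-1})(\sum u_i \otimes z_i)$ at the matrix level), then transfers back to $\bigl\|\sum_{i=1}^\ell u_i \otimes y_i\bigr\|_n$ by another perturbation. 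Summing a convergent geometric-type series closes the estimate, giving \eqref{eq:11} for $(y_n)$ with some finite constant $K'$.

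The main obstacle I anticipate is purely bookkeeping: one must choose the error sequence $(\eps_n)$ at the outset small enough that \emph{all} the later perturbation estimates — which involve the block-basis constant $2K$ and appear in both directions ($y \leftrightarrow z$) and inside matrix-level partial sums — close simultaneously, so the $\eps_n$'s must be fixed before the induction begins rather than chosen greedily. There is no genuinely new operator-space phenomenon here: the key point is that everything in the Banach-space proof is witnessed by the $\M_n$-level inequalities, and the projections $P_m$ act on $\M_n(X)$ as $\mathrm{id}_{\M_n} \otimes P_m$ with the same cb-norms, so the standard gliding-hump estimates go through mutatis mutandis. I would also remark that, a posteriori, $\overline{\mathrm{span}}(y_n)$ is completely complemented in $\overline{\mathrm{span}}(z_n)$ and hence, via the cb-basis projections, gives a completely bounded projection onto a subspace of $Y$, although only the cb-basic sequence claim is needed for the statement.
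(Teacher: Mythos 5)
Your proposal is correct and follows essentially the same route as the paper: a gliding-hump construction producing norm-one vectors $y_j\in Y$ that are small perturbations of a block sequence of the cb-basis, followed by a small-perturbation argument and the criterion of Proposition~\ref{pp:3}. The only cosmetic difference is that the paper packages the perturbation step as an appeal to the principle of small perturbations (Lemma~2.13.2 in \cite{Pi}, giving a complete isomorphism $R$ with $R(u_j)=y_j$ once $\sum_j\|u_j^*\|\,\|y_j-u_j\|<1$), whereas you verify the matrix-level inequality \eqref{eq:11} directly by hand.
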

\begin{proof} Let $K$ be the cb-basic constant of $(x_i)$. Given $p\in\N$, let $W_p$ be the finite-codimensional subspace of $X$ defined by
$$W_p=\Big\{x\in X:x=\sum_{i=p+1}^\infty a_i x_i\Big\}=\overline{\mathrm{span}(}x_i)_{i>p}.$$
Then $W_p\cap Y\neq\emptyset$, so there is $y\in Y\cap W_p$ with $\|y\|=1$.
First, choose an arbitrary $y_1\in\sum_{i=1}^\infty a_i^1 x_i\in Y$ with $\|y_1\|=1.$ Find $p_1\in\N$ such that for $u_1=\sum_{i=1}^{p_1} a_i^1 x_i\in X$ we have $\|y_1-u_1\|<\frac{1}{4K}.$ Choose $y_2=\sum_{i=p_1+1}^\infty a_i^2 x_i\in Y\cap W_{p_1}$ with $\|y_2\|=1$, and fix $p_2\in\N$ such that for $u_2=\sum_{i=p_1+1}^{p_2}a_i^2 x_i$ we have $\|y_2-u_2\|\le\frac{1}{2\cdot 2^2K}$. Using an induction procedure we obtain a block cb-basic sequence $(u_j)$ of $(x_i)$. Let $(u_j^*)$ be the biorthogonal functionals of $(u_j)$. Since $$\sum_{j=1}^\infty\|u_j^*\|\cdot\|y_j-u_j\|<\sum_{j=1}^\infty 2K\cdot\frac{1}{2\cdot 2^jK}<1,$$ by Lemma 2.13.2 in \cite{Pi},
there is a complete isomorphism $R:X\to X$ such that $R(u_j)=y_j$ for $j\in\N$.
Then for any $v_j\in\M_n$ and $m\le l$,
\begin{eqnarray*}
\big\|\sum_{j=1}^m v_j\otimes y_j\big\|_n &\le&
\|R\|_{cb}\big\|\sum_{j=1}^m v_j\otimes u_j\big\|_n
\le
K\|R\|_{cb}\big\|\sum_{j=1}^l v_j\otimes u_j\big\|_n \\
 &\le& K\|R\|_{cb}\|R^{-1}\|_{cb}\big\|\sum_{j=1}^l v_j\otimes y_j\big\|_n.
\end{eqnarray*}
Thus by Proposition \ref{pp:3}, $(y_j)\subset Y$ is a cb-basic sequence.
\end{proof}

\begin{proposition}\label{pp:4}
Let $X$ be a separable operator space. Then  following are equivalent:
\begin{enumerate}
  \item[(i)] $X$ is completely isomorphic to a subspace of an operator space with a cb-basis;
  \item[(ii)] $X$ is completely isomorphic to a subspace of an operator space with a cb-frame
   \item[(iii)] $X$ is completely isomorphic to a subspace of a (not necessarily separable) operator space with the CBAP.
 \end{enumerate}
\end{proposition}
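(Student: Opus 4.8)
The plan is to prove the cycle (i)$\,\Rightarrow\,$(ii)$\,\Rightarrow\,$(iii)$\,\Rightarrow\,$(i). The first two implications are immediate: if $X$ is completely isomorphic to a subspace of an operator space $V$ with a cb-basis $(e_i)$, then the bounded coordinate functionals $(e_i^*)\subset V^*$ make $(e_i,e_i^*)$ a cb-frame of $V$, since $v=\sum_i e_i^*(v)e_i$ for all $v\in V$ and the initial sums $\sum_{i=1}^m e_i\otimes e_i^*$ are the canonical basis projections $P_m$, with $\sup_m\|P_m\|_{cb}<\infty$; this yields (i)$\,\Rightarrow\,$(ii). For (ii)$\,\Rightarrow\,$(iii), an operator space $V$ carrying a cb-frame $(x_n,f_n)$ is separable, as $V=\overline{\mathrm{span}}\{x_n:n\in\N\}$, so Theorem \ref{th:1} shows that $V$ has the CBAP, and hence a subspace of $V$ is a subspace of an operator space with the CBAP. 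The content of the proposition is (iii)$\,\Rightarrow\,$(i), which is \emph{not} a direct consequence of Theorems \ref{th:1} and \ref{th:2}: a separable subspace $X$ of a CBAP operator space $W$ need not itself have the CBAP, so Theorem \ref{th:2} cannot be applied to $X$ directly. The idea is instead to manufacture a frame-type expansion of $X$ whose reconstructing vectors are allowed to live in $W$, and to feed it into the construction of Theorem \ref{th:2}.

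So suppose $X$ is completely isomorphic to a subspace of an operator space $W$ with the CBAP, witnessed by a net $(\Phi_\alpha)$ of finite-rank maps on $W$ with $\sup_\alpha\|\Phi_\alpha\|_{cb}\le K$ and $\Phi_\alpha(w)\to w$ for every $w\in W$; we may assume $X$ is infinite-dimensional, the finite-dimensional case being trivial. Here is where the separability of $X$ is used: I would first extract from this net a \emph{sequence} $(\Phi_k)$ of finite-rank maps on $W$ with $\sup_k\|\Phi_k\|_{cb}\le K$ and $\Phi_k(x)\to x$ for every $x\in X$, by choosing $\Phi_k$ from the net so that it approximates the identity to within $1/k$ on the first $k$ members of a countable dense subset of $X$; the uniform cb-bound then promotes this to pointwise convergence on all of $X$. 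Telescoping by $\Psi_1=\Phi_1$ and $\Psi_k=\Phi_k-\Phi_{k-1}$ for $k\ge2$ gives finite-rank maps with $\sup_k\|\Psi_k\|_{cb}\le2K$ and $x=\sum_k\Psi_k(x)$ for all $x\in X$. Applying Auerbach's theorem to the finite-dimensional subspaces $\Psi_k(X)\subset W$, exactly as in the proof of Theorem \ref{th:1}, produces vectors $(y_{k,j})\subset W$ with $\|y_{k,j}\|=1$ and functionals $(g_{k,j})\subset X^*$ with $\|g_{k,j}\|\le2K$ such that $\Psi_k|_X=\sum_j y_{k,j}\otimes g_{k,j}$. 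Rescaling and reindexing these into a single sequence $(x_n,f_n)\in W\times X^*$ precisely as in the proof of Theorem \ref{th:1}, and invoking Lemmas \ref{P.2.3} and \ref{P.2.4}, yields $x=\sum_n f_n(x)x_n$ for every $x\in X$ together with $\sup_m\|\sum_{n=1}^m x_n\otimes f_n\|_{cb}<\infty$, where the initial sums $S_m=\sum_{n=1}^m x_n\otimes f_n$ are now completely bounded maps from $X$ into $W$.

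Finally, with this data in hand, I would carry out the $c_{00}$-construction from the proof of Theorem \ref{th:2} verbatim, using the matrix norm of $W$ in place of that of $X$: on $\M_n(c_{00})$ define $|\|u|\|_n=\sup_{m\ge1}\|\sum_{i=1}^m u_i\otimes x_i\|_{\M_n(W)}$, which, by the same verification as in Theorem \ref{th:2}, is an operator space matrix norm for which the unit vectors $(e_i)$ form a cc-basis of the completion $Z$. The maps $Q:Z\to W$ determined by $e_i\mapsto x_i$ (completely bounded, with $\|Q\|_{cb}\le1$) and $T:X\to Z$ determined by $x=\sum_i f_i(x)x_i\mapsto\sum_i f_i(x)e_i$ (well-defined thanks to the Cauchy property of its partial sums, and completely bounded with $\|T\|_{cb}\le\sup_m\|\sum_{n=1}^m x_n\otimes f_n\|_{cb}$) satisfy $QT=\mathrm{id}_X$, so $T$ is a complete isomorphism of $X$ onto the subspace $T(X)$ of $Z$; as $Z$ has a cb-basis, (i) follows. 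I expect no real obstacle beyond this reorganization: the only technical point is the transplantation of Lemmas \ref{P.2.3} and \ref{P.2.4} to the case of initial sums valued in $W$, and this is a routine rerun of the estimates already carried out there. Note, however, that in contrast to Theorem \ref{th:2} the vectors $x_n$ now lie in $W$ rather than in $X$, so $Q(Z)=\overline{\mathrm{span}}\{x_n\}$ may properly contain $X$ and $TQ$ need not be a projection onto $T(X)$; one therefore gets a complete embedding but not a complemented one, which is precisely what distinguishes condition (i) from the conclusion of Theorem \ref{th:2}.
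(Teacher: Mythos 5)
Your proposal is correct, but for the substantive implication (iii)$\Rightarrow$(i) it takes a genuinely different route from the paper. The paper's proof is more modular: it first observes that (ii)$\Rightarrow$(i) follows immediately from Theorem \ref{th:2} (the ambient space $Y$ with a cb-frame embeds completely complementedly into a space $Z$ with a cb-basis, and then $X\hookrightarrow Y\hookrightarrow Z$), and for (iii)$\Rightarrow$(i) in the non-separable case it invokes a separate ``separable determination'' lemma (Lemma \ref{P.4.3}): every separable subspace $Y$ of a $\lambda$-CBAP space $W$ sits inside a \emph{separable} subspace $\tilde Y\subset W$ which itself has the $\lambda$-CBAP. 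Once $X\subset\tilde Y$ with $\tilde Y$ separable and CBAP, Theorems \ref{th:1} and \ref{th:2} apply to $\tilde Y$ verbatim and give $X\subset\tilde Y\hookrightarrow Z$. You instead bypass Lemma \ref{P.4.3} entirely by building an ``external'' frame $(x_n,f_n)\in W\times X^*$ for $X$ (sequence extraction from the net via separability of $X$, telescoping, Auerbach applied to $\Psi_k(X)\subset W$, and the spreading/reindexing trick) and then rerunning the $c_{00}$-construction of Theorem \ref{th:2} with the $\M_n(W)$-norms. This works: every step of Lemmas \ref{P.2.3}--\ref{P.2.4} and of the matrix-norm verification in Theorem \ref{th:2} only uses that the $x_n$ are nonzero vectors in \emph{some} operator space and that the $S_m$ are uniformly completely bounded maps $X\to W$, and your closing observation that one obtains an embedding but not a complemented one is exactly right. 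The trade-off is that the paper's argument is shorter and isolates a reusable lemma, while yours avoids the enlargement $\tilde Y$ at the cost of having to reopen and adapt (rather than merely cite) the earlier proofs; you should state explicitly that the adapted construction is being carried out for maps valued in $W$, since it is no longer a literal application of Theorems \ref{th:1} and \ref{th:2}.
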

\begin{proof}
It is obvious that (i)$\Rightarrow$(ii)    and (i) $\Rightarrow$ (iii).

(ii)  $\Rightarrow$ (i) Suppose that $X$  is completely isomorphic to a subspace of an operator space $Y$ with a cb-frame.
 By Theorem \ref{th:2},
$Y$ is completely isomorphic to a completely complemented subspace of an operator space $Z$ with a cb-basis.
Then  we can conclude from  the following completely isomorphic embeddings
\[ X \hookrightarrow Y \hookrightarrow Z.\]
that  $X$ is completely isomorphic to a subspace of $Z$ with a cb-basis.

(iii)  $\Rightarrow$ (i)  If   the operator space with the CBAP is separable, then it has a cb-frame by  Theorem \ref {th:1}
and thus we can get the result by (i) $\Leftrightarrow$ (ii).
If  the operator space with the CBAP is non-separable, we can obtain the result by   the following  lemma.
\end{proof}

\begin{lemma}\label {P.4.3}
If $X$ is an operator space with the $\lambda$-CBAP,
then, for every separable subspace $Y$ of $X$, there is a separable subspace $\tilde{Y}$ of $X$ which contains $Y$ and has the $\lambda$-CBAP.
\end{lemma}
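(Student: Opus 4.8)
The plan is to build $\tilde Y$ as an increasing union of separable subspaces, at each stage absorbing the images of a countable dense set under a suitable net of approximating maps. Fix a net $(\Phi_\alpha)$ of finite-rank maps on $X$ with $\sup_\alpha\|\Phi_\alpha\|_{cb}\le\lambda$ witnessing the $\lambda$-CBAP. The key point, which I would establish first, is a countability reduction: for a \emph{fixed} separable subspace $W\subset X$ one cannot in general pick countably many of the $\Phi_\alpha$ that already approximate every vector of $W$ (the net need not be sequential), but one \emph{can} pick, for each $w$ in a countable dense subset of $W$ and each $\eps=1/k$, a single index $\alpha(w,k)$ with $\|\Phi_{\alpha(w,k)}(w)-w\|<1/k$; this gives a countable family $\mathcal F_W=\{\Phi_{\alpha(w,k)}\}$ of completely bounded maps, still with cb-norms $\le\lambda$, such that every point of $W$ is a norm-limit of $\{\Phi(w'):\Phi\in\mathcal F_W,\ w'\in W\}$ (a $3\eps$-argument, approximating an arbitrary $w\in W$ by a dense $w'$ and using $\|\Phi\|_{cb}\le\lambda$ to control $\|\Phi(w)-\Phi(w')\|$).

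Next I would run the standard closure construction. Set $Y_0=Y$. Given a separable subspace $Y_n$, form the countable family $\mathcal F_{Y_n}$ as above, and let $Y_{n+1}$ be the closed linear span of $Y_n$ together with $\bigcup_{\Phi\in\mathcal F_{Y_n}}\Phi(Y_n)$; since each $\Phi$ has finite rank (in particular is bounded) and $\mathcal F_{Y_n}$ is countable, $Y_{n+1}$ is again separable. Put $\tilde Y=\overline{\bigcup_n Y_n}$, which is separable and contains $Y$. One then checks directly that $\tilde Y$ is invariant under an appropriate sub-net: more precisely, for $x\in\tilde Y$ and $\eps>0$, first approximate $x$ by some $y\in Y_n$ (some $n$), then choose $\Phi\in\mathcal F_{Y_n}$ with $\|\Phi(y)-y\|<\eps$; the restriction $\Phi|_{\tilde Y}$ maps $Y_n$ into $Y_{n+1}\subset\tilde Y$ and, by continuity, maps all of $\tilde Y$ into $\tilde Y$, so $\Psi:=\Phi|_{\tilde Y}$ is a completely bounded map on $\tilde Y$ with $\|\Psi\|_{cb}\le\lambda$ and $\|\Psi(x)-x\|\le\|\Psi(x)-\Psi(y)\|+\|\Psi(y)-y\|+\|y-x\|<(\lambda+2)\eps$.

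Finally I would upgrade these restricted maps to \emph{finite-rank} maps on $\tilde Y$ so as to literally verify the $\lambda$-CBAP for $\tilde Y$: each $\Phi$ has finite rank on $X$, hence $\Psi=\Phi|_{\tilde Y}$ has finite rank on $\tilde Y$ with the same cb-norm bound, and the displayed estimate shows that for every finite subset $F\subset\tilde Y$ and every $\eps>0$ there is such a $\Psi$ with $\|\Psi(x)-x\|<\eps$ for all $x\in F$ (apply the argument to a common $Y_n$ containing an $\eps$-net of $F$); by a routine diagonal/directed-set argument over pairs $(F,\eps)$ this produces a net of finite-rank maps on $\tilde Y$ of cb-norm $\le\lambda$ converging to the identity in the point-norm topology, i.e.\ $\tilde Y$ has the $\lambda$-CBAP.

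The main obstacle is the one flagged at the outset: the net $(\Phi_\alpha)$ witnessing the CBAP is genuinely a net, not a sequence, so one must be careful that the countable selection $\mathcal F_W$ genuinely suffices — this is exactly why the $3\eps$-argument using the uniform cb-bound $\|\Phi_\alpha\|_{cb}\le\lambda$ is essential, and why one reconstructs the approximating maps on $\tilde Y$ as restrictions of maps chosen \emph{after} each $Y_n$ is known rather than trying to fix a countable subnet once and for all. Everything else (separability of each $Y_n$, the cb-norm bounds passing to restrictions, finite rank passing to restrictions) is routine.
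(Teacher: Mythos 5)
Your overall strategy is the same as the paper's (an increasing union of separable subspaces, each absorbing the ranges of countably many chosen finite-rank approximants), but two steps as written do not hold up. First, the invariance claim fails: you define $Y_{n+1}$ to absorb only $\Phi(Y_n)$ for $\Phi\in\mathcal F_{Y_n}$, and then assert that ``by continuity'' $\Phi$ maps all of $\tilde Y$ into $\tilde Y$. Continuity only upgrades invariance of a \emph{dense} subset of $\tilde Y$, and $Y_n$ is not dense in $\tilde Y$; for $m>n$ nothing in your construction places $\Phi(Y_m)$ inside any $Y_j$, since the family chosen at stage $m$ is unrelated to $\mathcal F_{Y_n}$. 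So $\Psi=\Phi|_{\tilde Y}$ need not be a map \emph{on} $\tilde Y$. The fix is cheap and is exactly what the paper does: each chosen map has finite rank, so adjoin its entire (finite-dimensional) range $\Phi(X)$ to $Y_{n+1}$ rather than just $\Phi(Y_n)$; then $\Phi(\tilde Y)\subseteq\Phi(X)\subseteq\tilde Y$ automatically.

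Second, your countable selection $\mathcal F_W=\{\Phi_{\alpha(w,k)}\}$ is made \emph{per point} $w$ of a countable dense set, so a given member is only guaranteed to approximate its own $w$. At the end you need, for a finite set $F\subset\tilde Y$ and $\eps>0$, a \emph{single} $\Phi$ with $\|\Phi(y)-y\|<\eps$ for all $y$ in an $\eps$-net of $F$ simultaneously; your family does not provide this. You must either select indices $\alpha(S,k)$ for each finite subset $S$ of the dense set (using directedness of the net to get one index good for all of $S$ at once), or, as the paper does, use the uniform bound $\|\Phi_\alpha\|_{cb}\le\lambda$ to get, for each $n$, one finite-rank $F_n$ with $\|F_n(y)-y\|<1/n$ uniformly on the compact unit ball of the finite-dimensional space $\tilde Y_n$ (the paper keeps every stage finite-dimensional by spanning $Y_n$ with the ranges $F_k(X)$, $k<n$, which also makes the final approximating family a genuine sequence rather than a net). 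With these two repairs your argument goes through and coincides with the paper's proof.
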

\begin{proof}
Let $Y$ be a separable subspace of $X$.
There is a sequence $(y_n)$ which is dense in $Y$. For each $n\in\N$, put $Y_n=\mathrm{span}\{y_k\}_{k=1}^n$.
Then $\{Y_n\}$ is an increasing sequence of finite-dimensional subspaces of $Y$ whose union is dense in $Y$.
Since $B_{Y_1}$ is compact, there is a finite-rank map $F_1$ on $X$ with $\|F_1\|_{cb}\le\lambda$ and
$\|F_1(y)-y\|<1$ for all $y\in B_{Y_1}.$ Let $\tilde{Y_2}=\mathrm{span}\,Y_2\cup F_1(X)$. Then
there is a finite-rank map $F_2$ on $X$ with $\|F_2\|_{cb}\le\lambda$ and
$\|F_2(y)-y\|<1/2$ for all $y\in B_{\tilde{Y_2}}.$ Set $\tilde{Y_3}=\mathrm{span}\,Y_3\cup F_1(X)\cup F_2(X)$. Then
there is a finite-rank map $F_3$ on $X$ with $\|F_3\|_{cb}\le\lambda$ and
$\|F_3(y)-y\|<1/3$ for all $y\in B_{\tilde{Y_3}}.$ Applying the procedure to $\tilde{Y_3}$ and $F_3$ gives
$\tilde{Y_4}$ and $F_4$ and so on by induction.
Let \[\tilde{Y_n}=\mathrm{span}(Y_n\cup \cup_{k=1}^{n-1}F_k(X)).\] Then
there is a finite-rank map $F_n$ on $X$ with \[\|F_n\|_{cb}\le\lambda, \ \
\|F_n(y)-y\|<\frac{1}{n}, \, \forall \,y\in B_{\tilde{Y_n}}.\]
So $\{\tilde{Y_n}\}$ is an increasing sequence of
finite-dimensional subspaces of $X$, and define $\tilde{Y}$ to be the closure of $\cup_{n=2}^\infty \tilde{Y_n}$,
which is a separable subspace of $X$ with $Y\subseteq \tilde{Y}$ and $F_n(\tilde{Y})\subseteq \tilde{Y}$ for all $n\in\N$.
It is easy to prove that $\lim_{n\to\infty} F_n(y)=y$ for all $y\in\tilde{Y}$.
Then $\tilde{Y}$ has the $\lambda$-CBAP.
\end{proof}

Now we are ready to prove our main result in this section.
In  \cite {OR},  Oikhberg and Ricard constructed a separable  Hilbertian operator space $X_0$, which is isometrically isomorphic  to $\ell_2(\N)$,
 but every infinite-dimensional closed subspace of $X_0$ fails to have the \emph{operator space approximation
property} (\emph{OAP}).

\begin{theorem}\label{th:4}
The Oikhberg-Ricard space $X_0$ can not be completely isomorphic to any    subspace of an operator space with a cb-basis (or with a cb-frame).
\end{theorem}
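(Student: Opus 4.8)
The plan is to argue by contradiction using the properties of the Oikhberg--Ricard space $X_0$ together with the structure theory developed in Section~3. Suppose $X_0$ were completely isomorphic to a subspace of an operator space with a cb-basis (or, more generally, with a cb-frame). By Proposition~\ref{pp:4}, the three embedding notions in (i), (ii), (iii) are equivalent, so without loss of generality we may assume $X_0$ embeds completely isomorphically into an operator space $Z$ with a cb-basis $(z_i)$. Since $X_0$ is separable and infinite-dimensional, its image is an infinite-dimensional closed subspace of $Z$, and Proposition~\ref{pp:2} produces a cb-basic sequence $(y_j)$ inside (the copy of) $X_0$. Thus $X_0$ contains an infinite-dimensional closed subspace $W = \overline{\mathrm{span}}(y_j)$ which itself has a cb-basis.

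The key point is then that a cb-basis forces the OAP on $W$: if $(y_j)$ is a cb-basis of $W$ with $\sup_m \|P_m\|_{cb} = K < \infty$, then the finite-rank maps $P_m$ are completely bounded, uniformly in $m$, and converge to the identity of $W$ in the point-norm topology. In particular $W$ has the CBAP, hence the $\lambda$-bounded operator space approximation property for $\lambda = K$, hence the OAP. This contradicts the defining property of $X_0$ from \cite{OR}, namely that every infinite-dimensional closed subspace of $X_0$ fails the OAP. Therefore no such embedding can exist, which is exactly the assertion of Theorem~\ref{th:4}; the parenthetical ``or with a cb-frame'' case is covered by the same argument via (ii)$\Rightarrow$(i) in Proposition~\ref{pp:4}.

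The main obstacle, and the step requiring the most care, is the passage from ``$X_0$ embeds in an operator space with a cb-basis'' to ``$X_0$ contains an infinite-dimensional subspace with a cb-basis.'' This is not automatic for an arbitrary subspace of a space with a basis — even in the Banach space setting a subspace of a space with a Schauder basis need not have a basis — which is precisely why we route through Proposition~\ref{pp:2} (the operator-space analogue of the classical fact that every infinite-dimensional closed subspace of a Banach space with a basis contains a basic sequence). One must check that the cb-basic sequence $(y_j)$ extracted there genuinely lies inside the isomorphic copy of $X_0$, so that $\overline{\mathrm{span}}(y_j)$ is an infinite-dimensional closed subspace of $X_0$ after transporting back along the complete isomorphism; this is where the gliding-hump / small-perturbation construction in the proof of Proposition~\ref{pp:2}, built from a block sequence and the perturbation lemma \cite[Lemma 2.13.2]{Pi}, does the work. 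Once that subspace is in hand, the contradiction with the Oikhberg--Ricard property is immediate, so no further estimates are needed.
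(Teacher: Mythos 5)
Your proposal is correct and follows essentially the same route as the paper: reduce to the cb-basis case via Proposition~\ref{pp:4}, extract a cb-basic sequence inside the copy of $X_0$ via Proposition~\ref{pp:2}, and observe that its closed span is an infinite-dimensional subspace of $X_0$ with the CBAP, hence the OAP, contradicting the Oikhberg--Ricard property. The extra care you take about transporting the cb-basic sequence back along the complete isomorphism is a point the paper glosses over but handles identically in substance.
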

\begin{proof}
Suppose that $X_0$ is completely isomorphic to an (infinite-dimensional)    subspace $Y$ of an  operator space $Z$ with a cb-basis (or with a cb-frame).
It  is known from Proposition \ref {pp:2}  and Proposition \ref {pp:4} that  $Y$ must have a cb-basic sequence.
It follows that   $X_0$ has a cb-basic sequence $(e_j)$.
Then $\overline{\mathrm{span}}(e_j)$ is an infinite-dimensional closed subspace of $X_0$ having the CBAP.
This contradicts to Oikhberg-Ricard's result since    CBAP implies OAP (see \cite{ER}).
\end{proof}

\begin{remark}
Finally we note that  every 1-exact separable operator space is completely isomorphic to a subspace of an operator space with a cb-basis.
To see this let us recall from  \cite{EOR} that if $X$ is a  1-exact separable operator space,  then it is  completely isometric to a subspace of
a $1$-nuclear separable operator space $Y$.  Then by Propoosition \ref {pp:4}, $X$   is completely isomorphic to a subspace of an operator space with a cb-basis.
\end{remark}

\vskip1cm
%
%
\vskip1cm
%

\end{document}